\def\DateTime{24/December/2011, 12:30 (Kyoto)}
\def\Version{Version $1.5$}
\def\yes{\if00}
\def\no{\if01}
\def\iftenpt{\no}
\def\ifelevenpt{\no}
\def\iftwelvept{\yes}
\def\ifusepdf{\no}
\def\ifpsfont{\no}
\def\iftxfont{\no}
\def\ifpxfont{\yes}
\def\ifeulerfont{\no}
\theoremstyle{plain}
\newtheorem{Theorem}{Theorem}[section]
\newtheorem{Proposition}[Theorem]{Proposition}
\newtheorem{Lemma}[Theorem]{Lemma}
\newtheorem{Corollary}[Theorem]{Corollary}
\newtheorem{Claim}{Claim}[Theorem]
\theoremstyle{definition}
\newtheorem{Remark}[Theorem]{Remark}
\def\rom{\textup}
\newcommand{\ZZ}{{\mathbb{Z}}}
\newcommand{\QQ}{{\mathbb{Q}}}
\newcommand{\RR}{{\mathbb{R}}}
\newcommand{\KK}{{\mathbb{K}}}
\newcommand{\CC}{{\mathbb{C}}}
\newcommand{\PP}{{\mathbb{P}}}
\newcommand{\FF}{{\mathbb{F}}}
\newcommand{\OO}{{\mathscr{O}}}
\newcommand{\Proj}{\operatorname{Proj}}
\newcommand{\Rat}{\operatorname{Rat}}
\newcommand{\lex}{\operatorname{lex}}
\newcommand{\Spec}{\operatorname{Spec}}
\newcommand{\Supp}{\operatorname{Supp}}
\newcommand{\mult}{\operatorname{mult}}
\newcommand{\ord}{\operatorname{ord}}
\newcommand{\adeg}{\widehat{\operatorname{deg}}}
\newcommand{\Div}{\operatorname{Div}}
\newcommand{\aDiv}{\widehat{\operatorname{Div}}}
\newcommand{\vol}{\operatorname{vol}}
\newcommand{\avol}{\widehat{\operatorname{vol}}}
\newcommand{\aH}{\hat{H}^0}
\newcommand{\Tpsh}{\operatorname{PSH}}
\newcommand{\Tqpsh}{\operatorname{QPSH}}
\newcommand{\rest}[2]{\left.{#1}\right\vert_{{#2}}}
\begin{document}

\title[Arithmetic linear series with base conditions]%
{Arithmetic linear series with base conditions}
\author{Atsushi Moriwaki}
\address{Department of Mathematics, Faculty of Science,
Kyoto University, Kyoto, 606-8502, Japan}
\email{moriwaki@math.kyoto-u.ac.jp}
\date{\DateTime, (\Version)}
\subjclass{Primary 14G40; Secondary 11G50}
\begin{abstract}
In this note, we study the volume of arithmetic linear series with base conditions.
As an application, we consider the problem of Zariski decompositions on arithmetic varieties.
\end{abstract}


\maketitle


\section*{Introduction}
Let $X$ be a projective and flat integral scheme over $\ZZ$.
We assume that $X$ is normal and the generic fiber of $X \to \Spec(\ZZ)$ is a
$d$-dimensional smooth variety over $\QQ$.
Let $\Div(X)$ be the group of Cartier divisors on $X$ and let 
$\Div(X)_{\RR} := \Div(X) \otimes_{\ZZ} \RR$.
A pair $\overline{D} = (D, g)$ 
is called an arithmetic $\RR$-Cartier divisor
of $C^{0}$-type if
$D  \in \Div(X)_{\RR}$ (i.e. $D= \sum_{i=1}^r a_i D_i$ for some $D_1, \ldots, D_r \in \Div(X)$
and
$a_1, \ldots, a_r \in \RR$), 
$g : X(\CC) \to \RR \cup \{\pm\infty\}$ is a locally integrable function 
invariant under the complex conjugation map $F_{\infty} : X(\CC) \to X(\CC)$ and,
for any point $x \in X(\CC)$, there are an open neighborhood $U_{x}$ of $x$ and
a continuous function $u_x$ over $U_x$ such that
\[
g = u_x + \sum_{i=1}^r (-a_i) \log \vert f_i \vert^2\quad(a.e.)
\]
on $U_x$,
where $f_i$ is a local equation of $D_i$ on $U_x$ for each $i$.
We denote the vector space consisting of arithmetic $\RR$-Cartier divisors 
of $C^{0}$-type by $\aDiv_{C^{0}}(X)_{\RR}$.

Let $\Rat(X)^{\times}_{\RR} := \Rat(X)^{\times} \otimes_{\ZZ} \RR$ and let
\[
\widehat{(\ )}_{\RR}: \Rat(X)^{\times}_{\RR} \to \aDiv_{C^{0}}(X)_{\RR}
\]
be the natural extension of the homomorphism $\Rat(X)^{\times} \to \aDiv_{C^{0}}(X)_{\RR}$ given by
$\phi \mapsto \widehat{(\phi)}$.
Let $\overline{D}$ be an arithmetic $\RR$-Cartier divisor of $C^0$-type on $X$.
We define 
$\aH(X, \overline{D})$ and $\aH_{\RR}(X, \overline{D})$
to be
\[
\begin{cases}
\aH(X, \overline{D}) := \left\{ \phi \in \Rat(X)^{\times} \mid
\overline{D} + \widehat{(\phi)} \geq (0,0) \right\} \cup \{ 0 \}, \\
\aH_{\RR}(X, \overline{D}) := \left\{ \phi \in \Rat(X)^{\times}_{\RR} \mid
\overline{D} + \widehat{(\phi)}_{\RR} \geq (0,0) \right\} \cup \{ 0 \}.
\end{cases}
\]
For $\xi \in X$, the {\em $\RR$-asymptotic multiplicity 
of $\overline{D}$ at $\xi$} is given by
\[
\mu_{\RR,\xi}(\overline{D}) := \begin{cases}
\inf \left\{ \mult_{\xi}(D + (\phi)_{\RR}) \mid
\phi \in \aH_{\RR}(X, \overline{D}) \setminus \{ 0 \} \right\} & \text{if $\aH_{\RR}(X, \overline{D}) \not= \{ 0 \}$}, \\
\infty & \text{otherwise},
\end{cases}
\]
where $\mult_{\xi}$ is the multiplicity of the local ring given by
a local equation (for details, see \cite[Section~2.8]{Ko} or \cite[SubSection~6.5]{MoArZariski}).
Moreover, for $\xi_1, \ldots, \xi_l \in X$ and $\mu_1, \ldots, \mu_l \in \RR_{\geq 0}$,
we define the {\em arithmetic linear series of $\overline{D}$ 
with base conditions $\mu_1 \xi_1, \ldots, \mu_l \xi_l$} to be
\[
\aH(X, \overline{D} ; \mu_1 \xi_1, \ldots, \mu_l \xi_l) := 
\left\{ \phi \in \aH(X, \overline{D}) \setminus \{ 0 \} \mid
\mult_{\xi_i}(D + (\phi)) \geq \mu_i\ (\forall i) \right\} \cup \{ 0 \}.
\]
In addition, its volume is given by
\[
\avol(\overline{D} ; \mu_1 \xi_1, \ldots, \mu_l \xi_l) := \limsup_{n\to\infty}
\frac{\log \# \aH(X, n\overline{D} ; n\mu_1 \xi_1, \ldots, n\mu_l \xi_l)}{n^{d+1}/(d+1)!}.
\]

The main result of this paper is the following theorem:

\begin{Theorem}
\label{thm:vol:base:cond}
If $\xi_1, \ldots, \xi_l \in X_{\QQ}$, $\overline{D}$ is big and 
$\mu_i > \mu_{\RR,\xi_i}(\overline{D})$ for some $i$,
then 
\[
\avol(\overline{D} ; \mu_1 \xi_1, \ldots, \mu_l \xi_l) < \avol(\overline{D}).
\]
\end{Theorem}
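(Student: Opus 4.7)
\noindent\emph{Proof proposal.}
The plan is to reduce the statement to a one-variable inequality for the arithmetic volume along a single prime Cartier divisor on a blow-up model, and then derive the strict inequality from an auxiliary $\RR$-section of low order supplied by the definition of $\mu_{\RR,\xi_1}$.

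First, by monotonicity, the inclusion
\[
\aH(X, n\overline{D}; n\mu_1\xi_1, \ldots, n\mu_l\xi_l) \subseteq \aH(X, n\overline{D}; n\mu_i\xi_i)
\]
gives $\avol(\overline{D}; \mu_1\xi_1, \ldots, \mu_l\xi_l) \le \avol(\overline{D}; \mu_i\xi_i)$, so it suffices to handle $l = 1$ under $\mu_1 > \mu_{\RR,\xi_1}(\overline{D})$. Let $\pi: Y \to X$ be a projective birational morphism from a normal arithmetic variety that dominates the blow-up of the Zariski closure of $\xi_1$, arranged so that the reduced exceptional divisor over $\xi_1$ is a single prime Cartier divisor $E$ on $Y$. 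Pullback of rational functions identifies
\[
\aH(X, n\overline{D}; n\mu_1\xi_1) = \aH(Y, n(\pi^*\overline{D} - \mu_1 E)),
\]
hence $\avol(\overline{D}; \mu_1\xi_1) = \avol(\pi^*\overline{D} - \mu_1 E)$ and $\avol(\pi^*\overline{D}) = \avol(\overline{D})$. For every $\psi \in \aH_\RR(X, \overline{D})$, the pullback $\pi^*\psi$ satisfies $\ord_E(\pi^*D + (\pi^*\psi)_\RR) = \mult_{\xi_1}(D + (\psi)_\RR)$, so $\mu_{\RR,E}(\pi^*\overline{D}) \le \mu_{\RR,\xi_1}(\overline{D}) < \mu_1$. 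It thus suffices to prove the following one-variable statement: for a big arithmetic $\RR$-Cartier divisor $\overline{L}$ of $C^0$-type on $Y$, a prime Cartier divisor $E$, and a real $\mu > \mu_{\RR,E}(\overline{L})$, one has $\avol(\overline{L} - \mu E) < \avol(\overline{L})$.

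For this one-variable statement, the definition of $\mu_{\RR,E}$ furnishes $\psi_0 \in \aH_\RR(Y, \overline{L}) \setminus \{0\}$ with $c := \ord_E(L + (\psi_0)_\RR) < \mu$. Expanding $\psi_0 = \sum_j a_j \phi_j$ with $a_j \in \RR$, $\phi_j \in \Rat(Y)^\times$, and perturbing the coefficients $a_j$ into $\QQ$---permissible because $\overline{L}$ lies in the interior of the arithmetic big cone, so the effectivity $\overline{L} + \widehat{(\,\cdot\,)}_\RR \ge 0$ is an open condition---I produce an integer $m \ge 1$, a rational $c''$ with $c < c'' < \mu$, and $\phi_0 \in \Rat(Y)^\times$ satisfying $m\overline{L} + \widehat{(\phi_0)} \ge 0$ and $\ord_E(mL + (\phi_0)) \le m c''$. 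Multiplication by $\phi_0$ embeds $\aH(Y, (n-m)\overline{L}) \hookrightarrow \aH(Y, n\overline{L})$, and an element $\eta\phi_0$ of the image lies in $\aH(Y, n\overline{L}; n\mu E)$ only if $\ord_E((n-m)L + (\eta)) \ge n\mu - m c'' > (n-m)\mu$, a condition strictly sharper than the base condition at level $(n-m)\mu$. Combining this lattice-point comparison with the arithmetic Brunn--Minkowski inequality of Yuan---which gives the concavity of $\avol^{1/(d+1)}$ on the big cone---then yields the strict inequality $\avol(\overline{L} - \mu E) < \avol(\overline{L})$.

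The main obstacle is precisely the final combinatorial-analytic step: a single auxiliary $\phi_0$ contributes only a polynomial-order deficit to $\#\aH(Y, n\overline{L}; n\mu E)$, whereas a strict gap at the exponential scale is required. The cleanest resolution is likely through the arithmetic Okounkov-body description of $\avol$ due to Chen and Yuan, under which the base condition at a $\QQ$-point corresponds to intersecting the Okounkov body with a half-space $\{v_0 \ge \mu\}$; the hypothesis $\mu > \mu_{\RR,\xi_1}(\overline{D})$ then guarantees this half-space cuts off a set of positive Lebesgue measure, yielding the required exponential deficit via a standard Brunn--Minkowski argument. Additional care is needed near the boundary of the big cone, which is handled by arithmetic Fujita approximation.
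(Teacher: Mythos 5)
Your overall strategy---reduce to a prime divisor by blowing up, translate the base condition into a half-space cut of the arithmetic Okounkov body, and argue that the cut removes a set of positive Lebesgue measure---is indeed the mechanism the paper uses. But the proposal leaves the decisive step unproved and, in one place, misjustified. The openness argument you use to perturb the real exponents of $\psi_0$ into $\QQ$ is not valid: the Green function part of the effectivity condition, $g - \log\vert\phi^{\pmb{c}}\vert^2 \geq 0$ pointwise on $X(\CC)$, is a closed condition, and $\overline{L}$ being in the interior of the big cone does not make the set of admissible $\pmb{c}$ open (equality can hold on a subset of $X(\CC)$, and an arbitrarily small perturbation can then violate positivity). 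The paper handles this with a dedicated $\QQ$-approximation lemma (Lemma~\ref{lem:Q:approximation}) that pays a small $(0,\epsilon')$ penalty, analyzes $g_{\pmb{c}}$ separately near and away from $\bigcup_i \Supp((\phi_i))$, and constrains $\pmb{c}$ to a rational subspace $W$; none of that is captured by the ``open condition'' remark.

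More fundamentally, the positive-measure claim is asserted rather than proved. You must show the half-space $\{x_1 < \mu\}$ meets $\Theta(\overline{D})$, the region where the concave function $G_{\overline{D}}$ is strictly positive, not just the geometric Okounkov body $\Delta(D_K)$. The hypothesis $\mu > \mu_{\RR,\xi}(\overline{D})$ does not directly furnish a $\QQ$-rational section of small multiplicity whose arithmetic norm is strictly less than $1$. One needs (i) the identity $\mu_{\QQ,\xi}(\overline{D}) = \mu_{\RR,\xi}(\overline{D})$ for big $\overline{D}$ (Theorem~\ref{thm:equal:mu:Q:mu:R}, which rests on Lemma~\ref{lem:Q:approximation}), (ii) continuity of $t \mapsto \mu_{\QQ,\xi}(\overline{D}+(0,t))$ (Lemma~\ref{lem:cont:asym:mult}) so that a section with multiplicity below $\mu$ exists already at a level where $G_{\overline{D}}>0$, and (iii) a convexity argument (Claim~\ref{claim:thm:vol:base:cond:6}) to upgrade a nonempty intersection to nonempty interior, hence positive measure, before invoking Theorem~\ref{thm:integral:formula}. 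Without these your first route (multiplication by a single $\phi_0$) only gives a polynomial-order deficit, as you yourself note, and the appeal to Brunn--Minkowski or Fujita approximation does not supply the missing exponential gap.
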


Let us introduce a Zariski decomposition on $X$.
A decomposition $\overline{D} = \overline{P} + \overline{N}$ is called
a Zariski decomposition of $\overline{D}$ if the following conditions are satisfied
(for the positivity of arithmetic $\RR$-Cartier divisors of $C^0$-type, see 
Conventions and terminology~\ref{CT:positivity:arithmetic:divisors}):
\begin{enumerate}
\renewcommand{\labelenumi}{(\arabic{enumi})}
\item
$\overline{P}$ is a nef arithmetic $\RR$-Cartier divisor of $C^0$-type.

\item
$\overline{N}$ is an effective arithmetic $\RR$-Cartier divisor of $C^0$-type.

\item
$\avol(\overline{P}) = \avol(\overline{D})$.
\end{enumerate}
It is easy to see that the existence of a Zariski decomposition of $\overline{D}$ implies 
the pseudo-effectivity of $\overline{D}$ (cf. SubSection~\ref{subsec:imp:Zariski}).
Let $\Upsilon(\overline{D})$ be the set of all nef arithmetic $\RR$-Cartier divisors $\overline{M}$
of $C^0$-type
with $\overline{M} \leq \overline{D}$. 
Note that 
$\Upsilon(\overline{D})$ is not empty
if and only if there is a decomposition $\overline{D} = \overline{P} + \overline{N}$ with
the conditions (1) and (2).
For a non-big pseudo-effective arithmetic $\RR$-Cartier divisor $\overline{D}$ of $C^0$-type, 
the non-emptyness of $\Upsilon(\overline{D})$ is
a non-trivial problem. It is closely related to the fundamental question raised in the paper
\cite{MoD}.
Further, in the case where $d=1$ and $X$ is regular,
once we can see $\Upsilon(\overline{D}) \not= \emptyset$,
the greatest element of $\Upsilon(\overline{D})$ is ensured
by the main theorem of the paper \cite[Theorem~9.2.1]{MoArZariski}, and
it turns out to be the actual positive part of $\overline{D}$
(cf. Remark~\ref{rem:def:zariski:decomp}).
In this sense, the above definition has a meaning even for
a non-big pseudo-effective arithmetic $\RR$-Cartier divisor.
Of course, in this case, the uniqueness of the decomposition is not
guaranteed.

We would like to apply the above theorem to 
the problem of Zariski decompositions on arithmetic varieties
(cf. Conventions and terminology~\rom{\ref{CT:arith:var}}).
The first one is an estimation of the asymptotic multiplicity.

\begin{Theorem}
We assume that $\overline{D}$ is big.
If $\overline{D} = \overline{P} + \overline{N}$ is 
a Zariski decomposition of $\overline{D}$, 
then $\mu_{\RR,\xi}(\overline{D}) = \mult_{\xi}(N)$ for all $\xi \in X_{\QQ}$.
\end{Theorem}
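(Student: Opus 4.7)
The plan is to prove the two inequalities $\mu_{\RR,\xi}(\overline{D})\ge\mult_{\xi}(N)$ and $\mu_{\RR,\xi}(\overline{D})\le\mult_{\xi}(N)$ separately. A basic tool for both is the $\RR$-linearity of $\mult_{\xi}$ on $\RR$-Cartier divisors at $\xi\in X_{\QQ}$: since $X_{\QQ}$ is smooth, $\OO_{X,\xi}$ is regular and $\mult_{\xi}$ is the $\mathfrak{m}_{\xi}$-adic order of a local equation. In particular,
\[
\mult_{\xi}(D+(\phi)_{\RR})=\mult_{\xi}(P+(\phi)_{\RR})+\mult_{\xi}(N)
\]
for every $\phi\in\Rat(X)^{\times}_{\RR}$.

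For the lower bound I would reduce to Theorem~\ref{thm:vol:base:cond}: it is enough to show $\avol(\overline{D};\mult_{\xi}(N)\cdot\xi)=\avol(\overline{D})$, since the contrapositive of Theorem~\ref{thm:vol:base:cond} applied with $\mu=\mult_{\xi}(N)$ then yields $\mult_{\xi}(N)\le\mu_{\RR,\xi}(\overline{D})$. The volume identity rests on the inclusion
\[
\aH(X,n\overline{P})\hookrightarrow\aH(X,n\overline{D};n\mult_{\xi}(N)\cdot\xi),\qquad\phi\mapsto\phi,
\]
which lands there because (i) the effectivity of $n\overline{N}$ gives $n\overline{D}+\widehat{(\phi)}=(n\overline{P}+\widehat{(\phi)})+n\overline{N}\ge(0,0)$, and (ii) the additivity of $\mult_{\xi}$ gives $\mult_{\xi}(nD+(\phi))=\mult_{\xi}(nP+(\phi))+n\mult_{\xi}(N)\ge n\mult_{\xi}(N)$. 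Combined with the Zariski decomposition condition $\avol(\overline{P})=\avol(\overline{D})$ and the obvious reverse inequality $\avol(\overline{D};\mult_{\xi}(N)\cdot\xi)\le\avol(\overline{D})$, the identity follows.

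For the upper bound, the same argument shows that every nonzero $\phi\in\aH_{\RR}(X,\overline{P})$ already belongs to $\aH_{\RR}(X,\overline{D})$ and satisfies $\mult_{\xi}(D+(\phi)_{\RR})=\mult_{\xi}(P+(\phi)_{\RR})+\mult_{\xi}(N)$. Taking the infimum over such $\phi$ gives
\[
\mu_{\RR,\xi}(\overline{D})\le\mu_{\RR,\xi}(\overline{P})+\mult_{\xi}(N),
\]
so the upper bound reduces to the vanishing $\mu_{\RR,\xi}(\overline{P})=0$, and this is the main obstacle. The strategy for it is to perturb $\overline{P}$ by a small ample arithmetic divisor: for any arithmetic ample $\overline{A}$ and any $\delta>0$ the divisor $\overline{P}+\delta\overline{A}$ is ample, so $\mu_{\RR,\xi}(\overline{P}+\delta\overline{A})=0$; one then transfers the vanishing to $\overline{P}$ itself by letting $\delta\to 0$, either via a continuity property of $\mu_{\RR,\xi}$ on the arithmetic big cone or via an arithmetic Fujita-type approximation that replaces the nef big $\overline{P}$ by ample divisors on birational modifications of $X$.
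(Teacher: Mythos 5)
Your proof follows essentially the same route as the paper's, just presented as two direct inequalities rather than one inequality plus a contradiction. For the lower bound, the inclusion $\aH(X,n\overline{P})\subseteq\aH(X,n\overline{D};n\mult_\xi(N)\cdot\xi)$ and the consequence $\avol(\overline{P})\le\avol(\overline{D};\mult_\xi(N)\cdot\xi)$ are exactly what the paper uses; your ``contrapositive of Theorem~\ref{thm:vol:base:cond}'' and the paper's ``assume $\mu_{\RR,\xi}(\overline{D})<\mult_\xi(N)$ and derive a contradiction'' are the same argument in different dress. For the upper bound you reduce, exactly as the paper does, to the vanishing $\mu_{\RR,\xi}(\overline{P})=0$ for the nef and big positive part; in the paper this is simply Proposition~\ref{prop:mu:basic}(6) (nef and big implies vanishing asymptotic multiplicity), which is already recorded in the paper's toolkit, so the argument is over at that point with no further work.

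The one thing that needs fixing is your sketch for $\mu_{\RR,\xi}(\overline{P})=0$. The raw perturbation $\overline{P}+\delta\overline{A}$ followed by a ``continuity'' limit as $\delta\to 0$ does not close the argument as stated: the monotonicity in Proposition~\ref{prop:mu:basic}(2) gives $\mu_{\RR,\xi}(\overline{P}+\delta\overline{A})\le\mu_{\RR,\xi}(\overline{P})+\delta\,\mult_\xi(A)$, which bounds the perturbed multiplicity \emph{by} $\mu_{\RR,\xi}(\overline{P})$ rather than the other way around, so knowing the left side vanishes gives no information about $\mu_{\RR,\xi}(\overline{P})$. The standard fix is to use a convex combination instead: set $\overline{P}_\delta:=(1-\delta)\overline{P}+\delta\overline{A}$ (ample, so $\mu_{\RR,\xi}(\overline{P}_\delta)=0$), write $\overline{P}=\overline{P}_\delta+\delta(\overline{P}-\overline{A})$, and use subadditivity and homogeneity (parts (1) and (4) of Proposition~\ref{prop:mu:basic}) to get
\[
\mu_{\RR,\xi}(\overline{P})\le\mu_{\RR,\xi}(\overline{P}_\delta)+\delta\,\mu_{\RR,\xi}(\overline{P}-\overline{A})=\delta\,\mu_{\RR,\xi}(\overline{P}-\overline{A}).
\]
Since $\overline{P}$ is big, $\overline{P}-\overline{A}$ is still big for $\overline{A}$ small, so the second factor is finite and $\delta\to 0$ finishes. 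But since the paper already has this vanishing as a proposition, quoting Proposition~\ref{prop:mu:basic}(6) is both sufficient and cleaner than re-deriving it.
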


In the paper \cite{MoBig}, we considered a decomposition
$\overline{D} = \overline{P} + \overline{N}$ such that
$\mu_{\RR,\Gamma}(\overline{D}) = \mult_{\Gamma}(N)$ for any horizontal prime divisor 
$\Gamma$ on $X$.
The above theorem means that a Zariski decomposition of a big
arithmetic $\RR$-Cartier divisor of $C^0$-type yields the decomposition
treated in \cite[Section~5]{MoBig} (cf. Remark~\ref{rem:def:zariski:decomp}).
Thus, as a corollary, we have the following variant of the impossibility of Zariski decompositions.
The condition  $\mu_{\RR,\Gamma}(\overline{D}) = \mult_{\Gamma}(N)$ 
is rather technical, 
so that this form seems to be  more acceptable than \cite[Theorem~5.6]{MoBig}.

\begin{Theorem}
We suppose that $d \geq 2$ and $X = \PP^d_{\ZZ} (:= \Proj(\ZZ[T_0, T_1, \ldots, T_d]))$.
In addition, we assume that $\overline{D}$ is given by
\[
\left(H_0, \log (a_0 + a_1 \vert z_1 \vert^2 + \cdots + a_d \vert z_d \vert^2) \right),
\]
where $H_0 := \{ T_0 = 0 \}$,
$z_i := T_i/T_0$ \rom{(}$i=1, \ldots, d$\rom{)} and $a_0, a_1, \ldots, a_d \in \RR_{>0}$.
If $\overline{D}$ is big and not nef 
\rom{(}i.e. $a_0 + \cdots + a_d > 1$ and
$a_i < 1$ for some $i$\rom{)},
then, for any birational morphism $f : Y \to \PP^d_{\ZZ}$ of generically smooth,
normal and projective arithmetic varieties
\rom{(}cf. Conventions and terminology~\rom{\ref{CT:arith:var}}\rom{)},
there is no Zariski decomposition of $f^*(\overline{D})$ on $Y$.
\end{Theorem}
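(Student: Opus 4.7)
The plan is to derive this statement as a formal consequence of the preceding theorem (which identifies the asymptotic multiplicity with the multiplicity of the negative part of a Zariski decomposition) combined with the previous impossibility result \cite[Theorem~5.6]{MoBig}. I would argue by contradiction: suppose $f^*(\overline{D}) = \overline{P} + \overline{N}$ is a Zariski decomposition on $Y$.

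The first step is to verify that the pullback is still big, so that the preceding theorem applies. Since $f : Y \to \PP^d_{\ZZ}$ is a birational morphism between generically smooth, normal and projective arithmetic varieties, the arithmetic volume is preserved under pullback, giving $\avol(f^*(\overline{D})) = \avol(\overline{D}) > 0$. Applying the preceding theorem to $(Y, f^*(\overline{D}))$ then yields
\[
\mu_{\RR,\xi}(f^*(\overline{D})) = \mult_{\xi}(N) \quad \text{for every } \xi \in Y_{\QQ}.
\]
Specializing $\xi$ to the generic point of each horizontal prime divisor $\Gamma \subseteq Y$, one obtains $\mu_{\RR,\Gamma}(f^*(\overline{D})) = \mult_{\Gamma}(N)$ for all such $\Gamma$, which is exactly the technical condition appearing in \cite[Section~5]{MoBig}.

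The final step is to invoke \cite[Theorem~5.6]{MoBig}: under the hypotheses $a_0 + \cdots + a_d > 1$ and $a_i < 1$ for some $i$ (equivalently, $\overline{D}$ is big but not nef), no decomposition of the pullback $f^*(\overline{D})$ into a nef part plus an effective part subject to the above multiplicity condition along horizontal primes can exist. This contradicts the assumed existence of the Zariski decomposition on $Y$, completing the proof.

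The part I expect to require the most care is the passage from the arbitrary birational model $Y$ back to $\PP^d_{\ZZ}$ when appealing to \cite[Theorem~5.6]{MoBig}. If that cited result is formulated on $\PP^d_{\ZZ}$ itself rather than on an arbitrary birational model, the reduction is carried out by pushforward along $f$: nefness of $\overline{P}$ and effectiveness of $\overline{N}$ are preserved, the projection formula for birational pullbacks yields $\overline{D} = f_*f^*(\overline{D}) = f_*(\overline{P}) + f_*(\overline{N})$, and the multiplicity condition descends to the horizontal primes of $\PP^d_{\ZZ}$ via their strict transforms in $Y$, producing a decomposition on $\PP^d_{\ZZ}$ itself that violates \cite[Theorem~5.6]{MoBig}.
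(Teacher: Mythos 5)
Your main line of argument coincides with the paper's: Theorem~\ref{thm:Zariski:decomp:base:point} converts a Zariski decomposition of the big divisor $f^*(\overline{D})$ on $Y$ into one in the sense of \cite{MoBig} (i.e., $\mu_{\RR,\Gamma}(f^*(\overline{D})) = \mult_{\Gamma}(N)$ for every horizontal prime $\Gamma$ on $Y$), and \cite[Theorem~5.6]{MoBig} then rules such a decomposition out; the paper itself dispatches the corollary in one line by citing Remark~\ref{rem:def:zariski:decomp}. The precautionary pushforward step at the end is unnecessary, since \cite[Theorem~5.6]{MoBig} is already formulated for arbitrary birational models $Y \to \PP^d_{\ZZ}$ exactly as in the present corollary, and it would also be shaky as written: $f_*(\overline{P})$ need not remain nef, because the $(C^0 \cap \Tpsh)$ condition on the Green function does not obviously descend under pushforward along $f$.
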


\medskip
%
The third application is the unique existence of Zariski decompositions of big
arithmetic $\RR$-Cartier divisors on arithmetic surfaces.
The new point is the uniqueness of the Zariski decomposition in the sense of this paper, which gives a characterization of the Zariski decompositions.

\begin{Theorem}
We assume that $d=1$ and $X$ is regular.
If $\overline{D}$ is big, then there exists a unique 
Zariski decomposition of $\overline{D}$.
Namely, 
the positive part of the Zariski decomposition of $\overline{D}$
is the greatest element of $\Upsilon(\overline{D})$.
\end{Theorem}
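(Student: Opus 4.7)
The plan is to separate existence (which follows essentially from the main theorem of \cite{MoArZariski}) from uniqueness (which I would deduce from the arithmetic Hodge index theorem on regular arithmetic surfaces). First, for existence: because $\overline{D}$ is big it is pseudo-effective, so by subtracting a small multiple of an arithmetic ample divisor from $\overline{D}$ one produces a nef $\overline{M}\leq\overline{D}$, whence $\Upsilon(\overline{D})\neq\emptyset$. Under the hypothesis $d=1$ and $X$ regular, \cite[Theorem~9.2.1]{MoArZariski} then guarantees that the greatest element $\overline{P}$ of $\Upsilon(\overline{D})$ exists and satisfies $\avol(\overline{P})=\avol(\overline{D})$; setting $\overline{N}:=\overline{D}-\overline{P}$ (which is effective by the definition of $\Upsilon$) produces a Zariski decomposition in the sense of (1)--(3) above.

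For uniqueness, suppose $\overline{D}=\overline{P}'+\overline{N}'$ is any Zariski decomposition. Since $\overline{P}'$ is nef with $\overline{P}'\leq\overline{D}$, we have $\overline{P}'\in\Upsilon(\overline{D})$, so $\overline{P}'\leq\overline{P}$ by maximality of $\overline{P}$, and $\overline{E}:=\overline{P}-\overline{P}'$ is an effective arithmetic $\RR$-Cartier divisor of $C^{0}$-type; the goal is to show $\overline{E}=(0,0)$. On a regular arithmetic surface the volume of a nef arithmetic $\RR$-Cartier divisor of $C^{0}$-type coincides with its arithmetic self-intersection number, so $\avol(\overline{P})=\avol(\overline{P}')=\avol(\overline{D})$ yields $\adeg(\overline{P}^{2})=\adeg(\overline{P}'^{2})$, and expanding gives
\[
\adeg(\overline{E}\cdot\overline{P}) + \adeg(\overline{E}\cdot\overline{P}') = \adeg\bigl(\overline{E}\cdot(\overline{P}+\overline{P}')\bigr) = 0.
\]
Both $\adeg(\overline{E}\cdot\overline{P})$ and $\adeg(\overline{E}\cdot\overline{P}')$ are nonnegative, being a nef arithmetic divisor paired with an effective one, so they must both vanish, and consequently
\[
\adeg(\overline{E}^{2}) = \adeg(\overline{E}\cdot\overline{P}) - \adeg(\overline{E}\cdot\overline{P}') = 0.
\]

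Because $\overline{P}$ is big and nef, $\adeg(\overline{P}^{2}) = \avol(\overline{P}) > 0$. I would now invoke the arithmetic Hodge index theorem on $X$: any arithmetic $\RR$-Cartier divisor $\overline{M}\neq(0,0)$ of $C^{0}$-type with $\adeg(\overline{M}\cdot\overline{P})=0$ satisfies $\adeg(\overline{M}^{2})<0$. Applied to $\overline{M}=\overline{E}$, this forces $\overline{E}=(0,0)$, so $\overline{P}=\overline{P}'$ and $\overline{N}=\overline{N}'$.

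The main obstacle will be the application of the Hodge index inequality in the generality of arithmetic $\RR$-Cartier divisors of $C^{0}$-type, since the classical Faltings--Hriljac theorem is stated for $C^{\infty}$ Green functions and integral divisors. Either one uses approximation of $C^{0}$ Green functions by $C^{\infty}$ ones to reduce to the classical case, or, alternatively, one first applies the preceding theorem (the asymptotic-multiplicity identity $\mu_{\RR,\xi}(\overline{D})=\mult_{\xi}(N)$ for $\xi\in X_{\QQ}$) to both decompositions; this gives $\mult_{\xi}(N)=\mult_{\xi}(N')$ at every $\xi\in X_{\QQ}$, so $\overline{E}$ is purely vertical, after which a Zariski's lemma type negativity on the components of each fibre of $X\to\Spec(\ZZ)$, combined with $\adeg(\overline{E}\cdot\overline{P})=0$ and the bigness of $\overline{P}$, yields $\overline{E}=(0,0)$.
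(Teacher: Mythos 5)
Your uniqueness argument ultimately converges on the paper's approach, but both of the routes you sketch leave a genuine gap, and your existence step is not quite right.

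On existence: subtracting a small effective ample $\overline{A}$ from a big $\overline{D}$ produces another \emph{big} divisor, not a nef one, so this does not give an element of $\Upsilon(\overline{D})$. What the paper uses is that bigness supplies $\phi\in\Rat(X)^{\times}_{\RR}$ with $\overline{D}+\widehat{(\phi)}_{\RR}\geq(0,0)$; then $-\widehat{(\phi)}_{\RR}$ is a nef arithmetic $\RR$-Cartier divisor (the product formula gives degree zero on curves, and the local Green functions differ from the canonical ones by pluriharmonic functions) bounded above by $\overline{D}$, so $\Upsilon(\overline{D})\neq\emptyset$, and then \cite[Theorem~9.2.1]{MoArZariski} applies.

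On uniqueness: your first route invokes the strict arithmetic Hodge index inequality $\adeg(\overline{M}^2)<0$ for all nonzero $\overline{M}$ orthogonal to $\overline{P}$. As you note, that sharp form is not off-the-shelf for arithmetic $\RR$-Cartier divisors of $C^0$-type, and establishing it is essentially the same work as the argument you are trying to replace; worse, even the classical equality case does not say $\overline{M}=(0,0)$, only that $\overline{M}$ is degenerate in an appropriate sense, so a further step would be needed to show a degenerate \emph{effective} $\overline{E}$ is zero. Your alternative route is indeed the paper's strategy: Theorem~\ref{thm:Zariski:decomp:base:point} gives $\mult_\xi(N)=\mult_\xi(N')$ for all $\xi\in X_{\QQ}$, so $\overline{E}=\overline{P}-\overline{P}'$ has purely vertical divisor part $E$. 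But $\overline{E}=(E,u)$ also has a nonnegative $C^0$ Green function $u$, and Zariski's lemma plus bigness of $\overline{P}$ only kills $E$; you still have to show $u=0$. The paper's Claim~\ref{claim:thm:Zariski:decomp:charaterization:1} does this by splitting the intersection numbers into finite and archimedean pieces via \cite[Proposition~2.1.1]{MoD}: from $\adeg(\overline{P}\cdot\overline{E})=\adeg(\overline{E}^2)=0$ one extracts $\int_{X(\CC)} u\,dd^c[u]=0$ (whose equality case, \cite[Proposition~1.2.4, (3)]{MoD}, forces $u$ to be locally constant) and $\int_{X(\CC)} c_1(\overline{P})\,u=0$ (which, with $\deg_{\QQ}(P_{\QQ})>0$, forces the locally constant $u$ to vanish). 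Without this archimedean step your argument does not conclude.
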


Finally I would like to give my hearty thanks to the referee for pointing out 
inadequate parts of this paper.

\renewcommand{\thesubsubsection}{\arabic{subsubsection}}

\bigskip
\renewcommand{\theequation}{CT.\arabic{subsubsection}.\arabic{Claim}}
\subsection*{Conventions and terminology}
Here we fix several conventions and the terminology of this paper.
Let $\KK$ be either $\QQ$ or $\RR$.
For details of \ref{CT:arith:div} and \ref{CT:positivity:arithmetic:divisors},
see \cite{MoArZariski}.


\subsubsection{}
\label{CT:arith:var}
An {\em arithmetic variety} means a quasi-projective and flat integral scheme over $\ZZ$.
An arithmetic variety is said to be
{\em generically smooth} if the generic fiber over $\ZZ$ is smooth over
$\QQ$.

\subsubsection{}
\label{CT:arith:div}
Let $X$ be a generically smooth and normal arithmetic variety.
Let $\Div(X)$ be the group of Cartier divisors on $X$ and let
$\Div(X)_{\KK} := \Div(X) \otimes_{\ZZ} \KK$, whose element is
called a {\em $\KK$-Cartier divisor on $X$}.
A pair $\overline{D} = (D, g)$ 
is called an {\em arithmetic $\KK$-Cartier divisor
of $C^{\infty}$-type} (resp. {\em of $C^{0}$-type}) if
the following conditions are satisfied:
\begin{enumerate}
\renewcommand{\labelenumi}{(\alph{enumi})}
\item
$D$ is a $\KK$-Cartier divisor on $X$, that is,
$D = \sum_{i=1}^r a_i D_i$  for some $D_1, \ldots, D_r \in \Div(X)$ and
$a_1, \ldots, a_r \in \KK$.

\item
$g : X(\CC) \to \RR \cup \{\pm\infty\}$ is a locally integrable function and
$g \circ F_{\infty} = g \ (a.e.)$, where $F_{\infty} : X(\CC) \to X(\CC)$
is the complex conjugation map.

\item
For any point $x \in X(\CC)$, there are an open neighborhood $U_{x}$ of $x$ and
a $C^{\infty}$-function (resp. continuous function) 
$u_x$ on $U_x$ such that
\[
g = u_x + \sum_{i=1}^r (-a_i) \log \vert f_i \vert^2\quad(a.e.)
\]
on $U_x$,
where $f_i$ is a local equation of $D_i$ over $U_x$ for each $i$.
\end{enumerate}
If $u_x$ can be taken as a continuous plurisubharmonic function over $U_x$ for all $x \in X(\CC)$,
then the pair $\overline{D}$ is called an {\em arithmetic $\KK$-Cartier divisor
of $(C^0 \cap \Tpsh)$-type}.
Let $\mathcal{C}$ be either $C^{\infty}$ or $C^0$ or $C^0 \cap \Tpsh$.
The set of all arithmetic $\KK$-Cartier divisors of $\mathcal{C}$-type is denoted by
$\aDiv_{\mathcal{C}}(X)_{\KK}$. Moreover, the set 
\[
\left\{ (D, g) \in \aDiv_{\mathcal{C}}(X)_{\KK}
\mid D \in \Div(X) \right\}
\]
is denoted by
$\aDiv_{\mathcal{C}}(X)$. An element of $\aDiv_{\mathcal{C}}(X)$ is called
an {\em arithmetic Cartier divisor
of $\mathcal{C}$-type}.
Note that $\aDiv_{C^{\infty}}(X)_{\KK}$ and $\aDiv_{C^{0}}(X)_{\KK}$ are
vector spaces over $\KK$ and $\aDiv_{C^{0} \cap \Tpsh}(X)_{\KK}$ forms a cone in
$\aDiv_{C^{0}}(X)_{\KK}$.
For $\overline{D} = (D, g), \overline{E} = (E, h) \in \aDiv_{C^0}(X)_{\KK}$,
we define relations
$\overline{D} = \overline{E}$ and $\overline{D} \geq \overline{E}$ as follows:
\begin{align*}
\overline{D} = \overline{E} & \quad\overset{\text{def}}{\Longleftrightarrow}\quad D = E, \ \  g = h \ (a.e.), \\
\overline{D} \geq \overline{E} & \quad\overset{\text{def}}{\Longleftrightarrow}\quad D \geq E, \ \  g \geq h \ (a.e.).
\end{align*}

\subsubsection{}
\label{CT:positivity:arithmetic:divisors}
Let $X$ be a generically smooth, normal  and projective arithmetic variety.
Let $\overline{D}$ be an arithmetic $\RR$-Cartier divisor of $C^0$-type on $X$.
The effectivity, bigness, pseudo-effectivity and nefness of $\overline{D}$ are defined as follows:
\begin{enumerate}
\item[$\bullet$]
$\overline{D}$ is effective
$\quad\overset{\text{def}}{\Longleftrightarrow}\quad$
$\overline{D} \geq (0, 0)$.

\item[$\bullet$] $\overline{D}$ is big $\quad\overset{\text{def}}{\Longleftrightarrow}\quad$ $\avol(\overline{D}) > 0$.

\item[$\bullet$] $\overline{D}$ is pseudo-effective $\quad\overset{\text{def}}{\Longleftrightarrow}\quad$ $\overline{D}+\overline{A}$
is big for any big arithmetic $\RR$-Cartier divisor $\overline{A}$ of $C^0$-type.

\item[$\bullet$] $\overline{D} = (D,g)$ is nef $\quad\overset{\text{def}}{\Longleftrightarrow}$ 
\par
\begin{enumerate}
\renewcommand{\labelenumii}{(\alph{enumii})}
\item
$\adeg(\rest{\overline{D}}{C}) \geq 0$ for all reduced and irreducible $1$-dimensional closed subschemes $C$ of $X$.

\item
$\overline{D}$ is of $(C^0 \cap \Tpsh)$-type.
\end{enumerate}
\end{enumerate}
The interrelations of the various types of positivity as above can be summarized as follows:
\[
\xymatrix{
\text{effective} \ar@{=>}[rrd] & & \\
\text{big} \ar@{=>}[rr] & & \text{pseudo-effective} \\
\text{nef} \ar@{=>}[rru] & & 
}
\]

\renewcommand{\theTheorem}{\arabic{section}.\arabic{subsection}.\arabic{Theorem}}
\renewcommand{\theClaim}{\arabic{section}.\arabic{subsection}.\arabic{Theorem}.\arabic{Claim}}
\renewcommand{\theequation}{\arabic{section}.\arabic{subsection}.\arabic{Theorem}.\arabic{Claim}}

\section{Generalizations of Boucksom-Chen's results to $\RR$-Cartier divisors}

In this section, we will give generalizations of Boucksom-Chen's results \cite{BC} to 
arithmetic $\RR$-Cartier divisors. 
All results in this section can be proved in the similar way as the paper \cite{BC}.

\subsection{Geometric case}
First of all, let us review the geometric case.
The contents of this subsection are generalizations of the works due to
Okounkov \cite{O}, Lazarsfeld-Musta\c{t}\u{a} \cite{LM} and
Kaveh-Khovanskii \cite{KK1}, \cite{KK2} to $\RR$-Cartier divisors.

Let $T$ be a $d$-dimensional, geometrically irreducible, normal and
projective variety over a field $F$.
Let $\overline{F}$ be an algebraic closure of $F$ and let
$T_{\overline{F}} := T \times_{\Spec(F)} \Spec(\overline{F})$.
Let $P \in T(\overline{F})$ be a regular point and let $z_P = (z_1, \ldots, z_d)$ be a local system of parameters of $\OO_{T_{\overline{F}}, P}$.
Then
\[
\widehat{\OO}_{T_{\overline{F}}, P} = \overline{F}[\![z_1, \ldots, z_d]\!],
\]
where $\widehat{\OO}_{T_{\overline{F}}, P}$ is the completion of 
$\OO_{T_{\overline{F}}, P}$ with respect to
the maximal ideal of $\OO_{T_{\overline{F}}, P}$.
Thus, for $f \in \OO_{T_{\overline{F}}, P}$, we can put
\[
f = \sum_{(a_1, \ldots, a_d) \in \ZZ_{\geq 0}^d} c_{(a_1, \ldots, a_d)} z_1^{a_1} \cdots z_d^{a_d},
\]
where $c_{(a_1, \ldots, a_d)} \in \overline{F}$.
Note that $\ZZ^d$ has the lexicographic order $<_{\lex}$, that is,
\[
(a_1,\ldots, a_d) <_{\lex} (b_1, \cdots, b_d) \quad\overset{\text{def}}{\Longleftrightarrow}\quad
\text{$a_1 = b_1, \ldots, a_{i-1} = b_{i-1}, a_{i} < b_{i}$ for some $i$}.
\]
We define $\ord_{z_P}(f)$ to be
\[
\ord_{z_P}(f) := \begin{cases}
\min_{<_{\lex}} \left\{ (a_1, \ldots, a_d) \mid c_{(a_1, \ldots, a_d)} \not= 0 \right\} & \text{if $f \not= 0$},\\
\infty & \text{otherwise},
\end{cases}
\]
which gives rise to a rank $d$ valuation, that is,
the following properties are satisfied:
\begin{enumerate}
\renewcommand{\labelenumi}{(\roman{enumi})}
\item
$\ord_{z_P}(fg) = \ord_{z_P}(f) + \ord_{z_P}(g)$ for $f, g \in \OO_{T_{\overline{K}},P}$.

\item
$\ord_{z_P}(f + g) \geq \min \{ \ord_{z_P}(f), \ord_{z_P}(g) \}$ for $f, g \in \OO_{T_{\overline{F}},P}$.
\end{enumerate}
By the property (i), $\ord_{z_P} : \OO_{T_{\overline{F}},P} \setminus \{ 0 \} \to \ZZ^d$ has the natural extension
\[
\ord_{z_P} : \Rat(T_{\overline{F}})^{\times} \to \ZZ^d
\]
given by $\ord_{z_P}(f/g) = \ord_{z_P}(f) - \ord_{z_P}(g)$.
As $\ord_{z_P}(u) = (0,\ldots,0)$ for all $u \in \OO^{\times}_{T_{\overline{F}},P}$, 
$\ord_{z_P}$ induces
$\Rat(T_{\overline{F}})^{\times}/\OO^{\times}_{T_{\overline{F}},P} \to \ZZ^d$. 
The composition of homomorphisms
\[
\Div(T_{\overline{F}}) \overset{\alpha_P}{\longrightarrow} \Rat^{\times}(T_{\overline{F}})/\OO^{\times}_{T_{\overline{F}},P} \overset{\ord_{z_P}}{\longrightarrow} \ZZ^d
\]
is
denoted by $\mult_{z_P}$, where 
$\Div(T_{\overline{F}})$ is the group of Cartier divisors on $T_{\overline{F}}$ and
$\alpha_P : \Div(T_{\overline{F}}) \to \Rat(T_{\overline{F}})^{\times}/\OO^{\times}_{T_{\overline{F}},P}$ is the natural
homomorphism. Moreover, the homomorphism $\mult_{z_P} : \Div(T_{\overline{F}}) \to \ZZ^d$
yields the natural extension
\[
\Div(T_{\overline{F}}) \otimes_{\ZZ} \RR \to \RR^d
\]
over $\RR$.
By abuse of notation, the above extension is also denoted by $\mult_{z_P}$.

\medskip
For $D \in \Div(T)_{\RR} := \Div(T) \otimes_{\ZZ} \RR$, 
let $H^0(T, D)$ be a vector space over $F$ given by
\[
H^0(T, D) := \{ \phi \in \Rat(T)^{\times} \mid (\phi) + D \geq 0 \} \cup \{ 0 \}.
\]
In the same way as  \cite[Lemma~1.3]{LM} or \cite[(1.1)]{BC}],
we can see
\[
\dim_F V = \# \left\{ \mult_{z_{P}}((\phi) + D_{\overline{F}}) \mid
\phi \in V \otimes_{F} \overline{F} \setminus \{ 0 \} \right\}
\]
for a subspace $V$ of $H^0(T, D)$.

We set $R(D) := \bigoplus_{m \geq 0} H^0(T, mD)$, which forms a graded algebra in the natural way.
Let $V_{\bullet}$ be a graded subalgebra of $R(D)$.
We say {\em $V_{\bullet}$ contains an ample series} if 
$V_m \not= \{ 0\}$ for $m \gg 1$ and 
there is an ample $\QQ$-Cartier divisor $A$ with 
the following properties:
\[
\begin{cases}
\bullet\ \text{$A \leq D$.} \\
\bullet\ \text{There is a positive integer $m_0$ such that
$H^0(T, mm_0A) \subseteq V_{mm_0}$ for all $m \geq 1$.}
\end{cases}
\]
We set 
\[
\Gamma(V_{\bullet}) = \bigcup_{V_m \not= \{ 0 \}, m \geq 0} \left\{ (\mult_{z_P}((\phi) + mD_{\overline{F}}), m) \in \RR_{\geq 0}^{d} \times \ZZ_{\geq 0} \mid \phi \in V_{m} \otimes_F \overline{F} \setminus \{ 0 \} \right\}.
\]
Let $v : \RR^{d+1} \to \RR^d$ and $h : \RR^{d+1} \to \RR$ be the projections given by
\[
v(x_1, \ldots, x_d, x_{d+1}) = (x_1, \ldots, x_d)\quad\text{and}\quad
h(x_1, \ldots, x_d, x_{d+1}) = x_{d+1}.
\]
Let $\Theta$ be an effective $\RR$-Cartier divisor such that $D + \Theta \in \Div(T)$.
We assume that $V_{\bullet}$ contains an ample linear series.
Then, in the same way as \cite[Lemma~2.2]{LM},
we can see the following:
\begin{enumerate}
\renewcommand{\labelenumi}{(\arabic{enumi})}
\item
If we set $\theta = \mult_{z_P}(\Theta_{\overline{F}})$ and
$\Gamma'(V_{\bullet}) = \{ \gamma + h(\gamma)(\theta, 0) \mid \gamma \in \Gamma(V_{\bullet}) \}$,
then $\Gamma'(V_{\bullet}) \subseteq \ZZ_{\geq 0}^{d+1}$ and
$\Gamma'(V_{\bullet})$ generates $\ZZ^{d+1}$ as a $\ZZ$-module.

\item
${\displaystyle \bigcup_{m > 0} \frac{1}{m} \Gamma(V_{\bullet})_m}$ is bounded in $\RR^d$, where
\[
\Gamma(V_{\bullet})_m : = v\left(\Gamma(V_{\bullet}) \cap ( \RR_{\geq 0}^{d} \times \{ m \})\right) = v \left(\{ \gamma \in \Gamma(V_{\bullet}) \mid h(\gamma) = m \}\right).
\]
\end{enumerate}
Let $\Delta(V_{\bullet})$ be the closed convex hull of
$\bigcup_{m > 0} \frac{1}{m} \Gamma(V_{\bullet})_m$.
In the case where $V_m = H^0(T, mD)$ for all $m \geq 0$, $\Delta(V_{\bullet})$ is denoted by
$\Delta(D)$.
In the same arguments as \cite[Proposition~2.1]{LM} by using the above properties (1) and (2), 
we can see that
\[
\vol(\Delta(V_{\bullet})) = \lim_{m\to\infty} \frac{\dim_F V_m}{m^d}.
\]

\subsection{Arithmetic case}
Let $X$ be a $(d+1)$-dimensional, generically smooth, normal and projective arithmetic variety
(cf. Conventions and terminology~\ref{CT:arith:var}).
Let $X \to \Spec(O_K)$ be the Stein factorization of $X \to \Spec(\ZZ)$, so that
the generic fiber of $X \to \Spec(O_K)$ is geometrically irreducible.
Let $\overline{D} = (D, g)$ be an arithmetic $\RR$-Cartier divisor of $C^0$-type
(cf. Conventions and terminology~\ref{CT:arith:div}).
We define $\aH(X, \overline{D})$ to be
\[
\aH(X, \overline{D}) := \widehat{\Gamma}^{\times}(X, \overline{D}) \cup \{ 0 \},
\]
where $\widehat{\Gamma}^{\times}(X, \overline{D}) := \left\{ \phi \in \Rat(X)^{\times} \mid
\overline{D} + \widehat{(\phi)} \geq (0,0) \right\}$ (for details, see Section~\ref{sec:asym:mult}).
Let $V_{\bullet}$ be a graded subalgebra of $\bigoplus_{m\geq 0} H^0(X_K, m D_K)$ over $K$.
Using $X$ and $\overline{D}$, we can define the natural filtration $\FF^{\star}_{\overline{D}}$ of $V_{\bullet}$
given by
\[
\FF_{\overline{D}}^t V_m = \langle V_m \cap \aH(X, m \overline{D} + (0, -2t)) \rangle_K
\]
for $t \in \RR$. Note that we use $(0, -2t)$ to ensure consistency with the notation in
\cite[Definition~2.3]{BC} .
It is easy to see that $\FF_{\overline{D}}^t V_m \cdot \FF_{\overline{D}}^{t'} V_{m'} \subseteq \FF_{\overline{D}}^{t+t'} V_{m+m'}$.
Thus, if we set 
\[
V_m^t = \FF_{\overline{D}}^{tm} V_m, 
\]
then $V^t_{\bullet} := \bigoplus_{m \geq 0} V_{m}^t$ forms a subalgebra of $V_{\bullet}$.
For each $m$, we define $e_{\min}(\overline{D}; V_m)$ and $e_{\max}(\overline{D}; V_m)$ to be
\[
\begin{cases}
e_{\min}(\overline{D}; V_m) := \inf \left\{ t \in \RR \mid \FF_{\overline{D}}^t V_m \not= V_m \right\},\\
e_{\max}(\overline{D}; V_m) := \sup \left\{ t \in \RR \mid \FF_{\overline{D}}^t V_m \not= \{ 0 \} \right\}.
\end{cases}
\]
Then, in the similar way as \cite[Section~2]{BC}, 
we can see the following:
\begin{enumerate}
\renewcommand{\labelenumi}{(\arabic{enumi})}
\item
$-\infty < e_{\min}(\overline{D}; V_m)$ for each $m$.

\item
There is a constant $C$ such that $e_{\max}(\overline{D}; V_m) \leq Cm$.

\item
We set $e_{\max}(\overline{D};V_{\bullet}) = \limsup_{m\to\infty} e_{\max}(\overline{D}; V_m)/m$.
If $V_{\bullet}$ contains an ample series, then
$V_{\bullet}^t$ also contains an ample series for $t < e_{\max}(\overline{D};V_{\bullet})$.
\end{enumerate}

We assume that $V_{\bullet}$ contains an ample series.
As in \cite[Definition~1.8]{BC},
we define $G_{(\overline{D};V_{\bullet})} : \Delta(V_{\bullet}) \to \RR \cup \{ -\infty \}$ and $\widehat{\Delta}(\overline{D};V_{\bullet})$ to be
\[
\begin{cases}
G_{(\overline{D};V_{\bullet})}(x) := \sup \left\{ t \in \RR \mid x \in \Delta(V^t_{\bullet}) \right\}, \\
\widehat{\Delta}(\overline{D};V_{\bullet}) := \left\{ (x, t) \in \Delta(V_{\bullet}) \times \RR \mid 0 \leq t \leq G_{(\overline{D};V_{\bullet})} \right\}.
\end{cases}
\]
Note that $G_{(\overline{D};V_{\bullet})} : \Delta(V_{\bullet}) \to \RR \cup \{ -\infty \}$ is an upper
semicontinuous concave function (cf. \cite[SubSection~1.3]{BC}).
In the case where $V_m = H^0(X_K, mD_K)$ for all $m \geq 0$,
$G_{(\overline{D};V_{\bullet})}$ and $\widehat{\Delta}(\overline{D};V_{\bullet})$ are denoted by
$G_{\overline{D}}$ and $\widehat{\Delta}(\overline{D})$ respectively.
Moreover, we define $\avol(\overline{D};V_{\bullet})$ to be
\[
\avol(\overline{D};V_{\bullet}) := \limsup_{m\to\infty} 
\frac{\# \log \left( V_m \cap \aH(X, m\overline{D}) \right)}{m^{d+1}/(d+1)!}.
\]
Then, in the similar way as  \cite[Theorem~2.8]{BC}, 
we have the following theorem:

\begin{Theorem}
\label{thm:integral:formula}
$\avol(\overline{D};V_{\bullet}) = (d+1)! [K : \QQ] \vol (\widehat{\Delta}(\overline{D};V_{\bullet}))$,
that is, 
\[
\avol(\overline{D};V_{\bullet}) = (d+1)! [K : \QQ] \int_{\Theta(\overline{D};V_{\bullet})}
G_{(\overline{D};V_{\bullet})}(x) dx,
\]
where
$\Theta(\overline{D};V_{\bullet})$ is the closure of  $\left\{ x \in \Delta(V_{\bullet}) \mid 
G_{(\overline{D};V_{\bullet})}(x) > 0 \right\}$.
\end{Theorem}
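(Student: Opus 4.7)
The plan is to follow the strategy of \cite[Theorem~2.8]{BC} with two modifications: first, one must rework the set-up so that $D$ need only be an $\RR$-Cartier divisor (rather than an honest Cartier divisor), and second, one must verify that all the convex-geometric estimates involving the Okounkov body $\Delta(V_\bullet)$ go through under the weaker ``$V_\bullet$ contains an ample series'' hypothesis recalled in the previous subsection. The core idea is unchanged: exhibit the filtered pieces $V_\bullet^t$ as graded subalgebras containing an ample series, compute their geometric volumes as $d$-dimensional volumes of the Okounkov bodies $\Delta(V_\bullet^t)$, and then integrate in $t$.

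First, using property (3) of the previous list, for each $t < e_{\max}(\overline{D};V_\bullet)$ the graded algebra $V_\bullet^t$ contains an ample series, so the $\RR$-divisor version of the geometric statement recalled above yields
\[
\vol(\Delta(V_\bullet^t)) = \lim_{m\to\infty} \frac{\dim_K V_m^t}{m^d}.
\]
Together with the upper semicontinuous concavity of $G_{(\overline{D};V_\bullet)}$, this identifies $\Delta(V_\bullet^t)$ with the super-level set $\{G_{(\overline{D};V_\bullet)} \geq t\}$ up to a negligible set, and hence by Fubini
\[
\vol(\widehat{\Delta}(\overline{D};V_\bullet))
= \int_{\Theta(\overline{D};V_\bullet)} G_{(\overline{D};V_\bullet)}(x)\,dx
= \int_{0}^{e_{\max}(\overline{D};V_\bullet)} \vol(\Delta(V_\bullet^t))\,dt.
\]

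Next, the counting side. As in \cite[Sect.~2]{BC}, for each $m$ and each integer $t$ the space $V_m \cap \aH(X, m\overline{D})$ has a basis whose elements admit successive minima governed by the filtration $\FF^\star_{\overline{D}}$. Consequently
\[
\log \#\bigl(V_m \cap \aH(X, m\overline{D})\bigr)
= [K:\QQ]\sum_{t \in \ZZ} t \bigl(\dim_K \FF^{t}_{\overline{D}} V_m - \dim_K \FF^{t+1}_{\overline{D}} V_m\bigr) + O(m^{d+1-\epsilon}),
\]
after rescaling $t$ by $m$ (and multiplying by $2$ because of the convention $(0,-2t)$) this becomes a Riemann sum for $\int \vol(\Delta(V_\bullet^t))\,dt$. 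Property (1) gives the lower bound needed to truncate the sum at $t \geq -Cm$, and property (2) gives the matching upper bound; combined with the integral identity above and multiplied by $(d+1)!$ one obtains the claimed formula.

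The main obstacle will be the passage from $\ZZ$-Cartier to $\RR$-Cartier divisors in the combinatorial step, because the filtration $\FF^\star_{\overline{D}}$ no longer has integer jumps. The fix is to fix an effective $\RR$-Cartier divisor $\Theta$ with $D+\Theta \in \Div(X)$, absorb $\Theta$ into the translation of the Okounkov cone via the map $\gamma \mapsto \gamma + h(\gamma)(\theta,0)$ already introduced in (1) of the previous subsection, and replace strict integrality of the filtration jumps by the observation that the jump function is a piecewise affine function in $t$ whose integral against $dt$ depends continuously on the data. Granting this, the Boucksom--Chen argument transfers verbatim and yields the two stated equalities. \QED
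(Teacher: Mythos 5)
The paper itself gives no detailed argument here: it simply states that the result follows ``in the similar way as \cite[Theorem~2.8]{BC}''. So your reconstruction of the Boucksom--Chen argument is more ambitious than the source text, and the skeleton you describe is the right one — compute $\vol(\widehat{\Delta}(\overline{D};V_\bullet))$ by Fubini as $\int_0^{e_{\max}} \vol(\Delta(V_\bullet^t))\,dt$ on the geometric side, and relate $\log\#(V_m \cap \aH(X,m\overline{D}))$ to the filtration jumps on the arithmetic side. The device of the $\Theta$-shift $\gamma \mapsto \gamma + h(\gamma)(\theta,0)$ for passing to $\RR$-Cartier divisors is also correctly identified from the earlier subsection.

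However, the counting identity you write down would not prove the theorem as stated. The expression $\sum_{t\in\ZZ} t\,(\dim_K\FF^t_{\overline{D}}V_m - \dim_K\FF^{t+1}_{\overline{D}}V_m)$ is, up to $O(\dim V_m)$, the sum of \emph{all} filtration jumps $\sum_i\lambda_i$; but the arithmetic degree / Minkowski-type estimate relates $\log\#(V_m\cap\aH(X,m\overline{D}))$ to the sum of \emph{nonnegative} jumps $\sum_i\max(\lambda_i,0)$. The discrepancy $\sum_{\lambda_i<0}|\lambda_i|$ has order $m\cdot\dim V_m\sim m^{d+1}$ whenever the normalized lower bound $e_{\min}(\overline{D};V_m)/m$ tends to a negative limit — exactly the leading order of the quantity you are trying to compute — so it is not absorbed by the $O(m^{d+1-\epsilon})$ error term. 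It is precisely this truncation that produces the lower limit $t=0$ in $\int_0^{e_{\max}}\vol(\Delta(V_\bullet^t))\,dt$, and it cannot be dropped. Two smaller slips: the ``multiply by $2$'' step is spurious — the twist $(0,-2t)$ is chosen (as the paper notes) exactly so that $\FF^t V_m$ is the span of sections with $\|s\|\le e^{-t}$, matching BC's normalization with no extra factor; and $V_m\cap\aH(X,m\overline{D})$ is a finite set rather than a vector space, so it has no ``basis'' — the object with successive minima adapted to the filtration is the ambient lattice in $V_m\otimes_K\RR$.
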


\renewcommand{\theTheorem}{\arabic{section}.\arabic{Theorem}}
\renewcommand{\theClaim}{\arabic{section}.\arabic{Theorem}.\arabic{Claim}}
\renewcommand{\theequation}{\arabic{section}.\arabic{Theorem}.\arabic{Claim}}

\section{Asymptotic multiplicity}
\label{sec:asym:mult}
Let $X$ be a $(d+1)$-dimensional, generically smooth, normal and projective arithmetic variety
(cf. Conventions and terminology~\ref{CT:arith:var}).
Let $\KK$ be either $\QQ$ or $\RR$. 
Let $\Rat(X)^{\times}_{\KK} := \Rat(X)^{\times} \otimes_{\ZZ} \KK$, and
let 
\[
(\ )_{\KK} : \Rat(X)^{\times}_{\KK} \to \Div(X)_{\KK}\quad\text{and}\quad
\widehat{(\ )}_{\KK} : \Rat(X)^{\times}_{\KK} \to \aDiv_{C^{\infty}}(X)_{\KK}
\]
 be the natural extensions of
the homomorphisms 
\[
\Rat(X)^{\times} \to \Div(X)\quad\text{and}\quad
\Rat(X)^{\times} \to \aDiv_{C^{\infty}}(X)
\]
given by
$\phi \mapsto (\phi)$ and 
$\phi \mapsto \widehat{(\phi)}$ respectively.
Let $\overline{D}$ be an arithmetic $\RR$-Cartier divisor of $C^0$-type
(cf. Conventions and terminology~\ref{CT:arith:div}).
We define $\widehat{\Gamma}^{\times}(X, \overline{D})$ and $\widehat{\Gamma}^{\times}_{\KK}(X, \overline{D})$
to be
\[
\begin{cases}
\widehat{\Gamma}^{\times}(X, \overline{D}) := \left\{ \phi \in \Rat(X)^{\times} \mid
\overline{D} + \widehat{(\phi)} \geq (0,0) \right\}, \\
\widehat{\Gamma}^{\times}_{\KK}(X, \overline{D}) := \left\{ \phi \in \Rat(X)^{\times}_{\KK} \mid
\overline{D} + \widehat{(\phi)}_{\KK} \geq (0,0) \right\}.
\end{cases}
\]
Note that $\widehat{\Gamma}^{\times}_{\QQ}(X, \overline{D}) = \bigcup_{n=1}^{\infty} \widehat{\Gamma}^{\times}(X, n\overline{D})^{1/n}$.
Moreover, $\aH(X, \overline{D})$ and $\aH_{\KK}(X, \overline{D})$ are defined by
\[
\aH(X, \overline{D}) := \widehat{\Gamma}^{\times}(X, \overline{D}) \cup \{ 0 \}
\quad\text{and}\quad
\aH_{\KK}(X, \overline{D}) := \widehat{\Gamma}^{\times}_{\KK}(X, \overline{D}) \cup \{ 0 \}.
\]
For $\xi \in X$,
we define the {\em $\KK$-asymptotic multiplicity of $\overline{D}$ at $\xi$} to be
\[
\mu_{\KK,\xi}(\overline{D}) :=
\begin{cases}
\inf \left\{ \mult_{\xi}(D + (\phi)_{\KK}) \mid \phi \in \Gamma_{\KK}^{\times}(X, \overline{D}) \right\} &
\text{if $\Gamma_{\KK}^{\times}(X, \overline{D}) \not= \emptyset$}, \\
\infty & \text{otherwise}.
\end{cases}
\]

First let us observe the elementary properties of the $\KK$-asymptotic multiplicity
(cf. \cite[Proposition~6.5.2 and Proposition~6.5.3]{MoArZariski}).

\begin{Proposition}
\label{prop:mu:basic}
Let $\overline{D}$ and $\overline{E}$ be arithmetic $\RR$-Cartier divisors of $C^0$-type.
Then we have the following:
\begin{enumerate}
\renewcommand{\labelenumi}{(\arabic{enumi})}
\item
$\mu_{\KK,\xi}(\overline{D} + \overline{E}) \leq 
\mu_{\KK,\xi}(\overline{D}) + \mu_{\KK,\xi}(\overline{E})$.

\item
If $\overline{D} \leq \overline{E}$, then 
$\mu_{\KK,\xi}(\overline{E}) \leq \mu_{\KK,\xi}(\overline{D}) + \mult_{\xi}(E - D)$.

\item
$\mu_{\KK,\xi}(\overline{D} + \widehat{(\phi)}_{\KK}) = \mu_{\KK,\xi}(\overline{D})$ for 
$\phi \in \Rat(X)^{\times}_{\KK}$.

\item
$\mu_{\KK,\xi}(a\overline{D}) = a \mu_{\KK,\xi}(\overline{D})$ for $a \in \KK_{>0}$.

\item
$0 \leq \mu_{\RR,\xi}(\overline{D}) \leq \mu_{\QQ,\xi}(\overline{D})$.

\item
If $\overline{D}$ is nef and big, then $\mu_{\KK,\xi}(\overline{D}) = 0$.
\end{enumerate}
\end{Proposition}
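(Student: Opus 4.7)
The plan is to derive (1)--(5) directly from the group structure of $\widehat{\Gamma}^\times_\KK(X,\overline{D})$ and elementary properties of $\mult_\xi$; only (6) requires a real positivity input, and it is handled by a reduction to the arithmetically ample case.

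For (1), if $\phi\in\widehat{\Gamma}^\times_\KK(X,\overline{D})$ and $\psi\in\widehat{\Gamma}^\times_\KK(X,\overline{E})$, then the identity $\widehat{(\phi\psi)}_\KK=\widehat{(\phi)}_\KK+\widehat{(\psi)}_\KK$ places $\phi\psi$ in $\widehat{\Gamma}^\times_\KK(X,\overline{D}+\overline{E})$, and the additivity $\mult_\xi(D_1+D_2)=\mult_\xi(D_1)+\mult_\xi(D_2)$ on effective Cartier divisors gives the subadditivity after taking infima. For (2), $\overline{D}\leq\overline{E}$ yields the inclusion $\widehat{\Gamma}^\times_\KK(X,\overline{D})\subseteq\widehat{\Gamma}^\times_\KK(X,\overline{E})$, and the decomposition $E+(\phi)_\KK=(D+(\phi)_\KK)+(E-D)$ together with additivity of $\mult_\xi$ produces the claimed bound. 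For (3), the translation $\psi\mapsto\phi\psi$ is a bijection $\widehat{\Gamma}^\times_\KK(X,\overline{D}+\widehat{(\phi)}_\KK)\to\widehat{\Gamma}^\times_\KK(X,\overline{D})$ satisfying $\mult_\xi((D+(\phi)_\KK)+(\psi)_\KK)=\mult_\xi(D+(\phi\psi)_\KK)$, so the infima coincide. For (4), the scaling $\phi\mapsto a\phi$ in $\Rat(X)^\times_\KK$ is a bijection $\widehat{\Gamma}^\times_\KK(X,\overline{D})\to\widehat{\Gamma}^\times_\KK(X,a\overline{D})$ rescaling $\mult_\xi(D+(\cdot)_\KK)$ by $a$. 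Finally, (5) combines the nonnegativity of $\mult_\xi$ on effective $\RR$-divisors with the inclusion $\widehat{\Gamma}^\times_\QQ(X,\overline{D})\subseteq\widehat{\Gamma}^\times_\RR(X,\overline{D})$, which only enlarges the domain of the infimum.

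The substantive content is (6). The ample case is the key intermediate step: for arithmetically ample $\overline{A}$ and $\xi\in X_\QQ$, the complete arithmetic linear system of $n\overline{A}$ is jet-separating at $\xi$ for large $n$, so small sections whose divisor does not pass through $\xi$ exist, forcing $\mu_{\KK,\xi}(\overline{A})=0$. For a general nef and big $\overline{D}$, one approximates $\overline{D}$ by the ample classes $\overline{D}+\epsilon\overline{H}$ (arithmetically ample since nef $+$ ample $=$ ample) and transfers the vanishing $\mu_{\KK,\xi}(\overline{D}+\epsilon\overline{H})=0$ back to $\overline{D}$ by subtracting carefully chosen effective representatives of $\epsilon\overline{H}$ whose multiplicity at $\xi$ is arbitrarily small. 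The main obstacle is the arithmetic bookkeeping in this subtraction: one must ensure that the resulting class remains effective both in the divisor component and in the Green-function component, which requires sup-norm estimates on the small sections being used. The complete argument appears in \cite[Proposition~6.5.2 and Proposition~6.5.3]{MoArZariski}, to which we ultimately refer.
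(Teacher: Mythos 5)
Your argument for (1)--(5) is the same in substance as the paper's: subadditivity of $\mu_{\KK,\xi}$ comes from multiplying elements of the $\widehat{\Gamma}^\times_\KK$-groups, (3) and (4) come from the translation and scaling bijections, and (5) from nonnegativity of $\mult_\xi$ on effective divisors together with $\widehat{\Gamma}^\times_\QQ(X,\overline{D})\subseteq\widehat{\Gamma}^\times_\RR(X,\overline{D})$ (your direct treatment of (2) is a cosmetic rearrangement of the paper's ``(2) is derived from (1)''). For (6) the paper itself only says it follows from (5) and \cite[Proposition~6.5.3]{MoArZariski}, which is exactly the reference you land on, so you are in agreement; just note that your motivational sketch--approximating by $\overline{D}+\epsilon\overline{H}$ and then subtracting an effective representative of $\epsilon\overline{H}$--does not on its own preserve effectivity of the resulting arithmetic divisor, which is precisely why the delegated argument is non-trivial and must be cited rather than reproduced in two lines.
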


\begin{proof}
(1)  
If $\widehat{\Gamma}^{\times}_{\KK}(X, \overline{D}+\overline{E}) = \emptyset$,
then either $\widehat{\Gamma}^{\times}_{\KK}(X, \overline{D}) = \emptyset$ or
$\widehat{\Gamma}^{\times}_{\KK}(X, \overline{E}) = \emptyset$, so that
we may assume that $\widehat{\Gamma}^{\times}_{\KK}(X, \overline{D}+\overline{E}) \not= \emptyset$.
Thus we may also assume that $\widehat{\Gamma}^{\times}_{\KK}(X, \overline{D}) \not= \emptyset$ and
$\widehat{\Gamma}^{\times}_{\KK}(X, \overline{E}) \not= \emptyset$.
Therefore, the assertion follows because $\phi \psi \in \widehat{\Gamma}^{\times}_{\KK}(X, \overline{D}+\overline{E})$ for all $\phi \in \widehat{\Gamma}^{\times}_{\KK}(X, \overline{D})$ and
$\psi \in \widehat{\Gamma}^{\times}_{\KK}(X, \overline{E})$.

(2) is derived from (1).

(3) 
The assertion follows from the following:
\[
\psi \in \widehat{\Gamma}^{\times}_{\KK}(X, \overline{D})\quad\Longleftrightarrow \quad
\psi\phi^{-1} \in \widehat{\Gamma}^{\times}_{\KK}(X, \overline{D}+ \widehat{(\phi)}_{\KK}).
\]

(4) Note that $\psi \in \widehat{\Gamma}^{\times}_{\KK}(X, \overline{D})$ if and only if
$\psi^a \in \widehat{\Gamma}^{\times}_{\KK}(X, a\overline{D})$, and that
\[
\mult_{\xi}(a D + (\psi^a)_{\KK}) = a \mult_{\xi}(D + (\psi)_{\KK}),
\]
which implies (4).

(5) is obvious.

(6) follows from (5) and \cite[Proposition~6.5.3]{MoArZariski}
\end{proof}

\begin{Remark}
Theorem~\ref{thm:equal:mu:Q:mu:R} says that if $\overline{D}$ is big,
then $\mu_{\RR, \xi}(\overline{D}) = \mu_{\QQ, \xi}(\overline{D})$.
In general, it does not hold.
Let $\PP^1_{\ZZ} = \Proj(\ZZ[T_0, T_1])$ be the projective line over $\ZZ$.
We set $D := \{ T_0 = 0 \}$ and $z := T_1/T_0$.
Let $a_0, a_1 \in \RR_{>0}$ such that $a_0 + a_1 = 1$ and $a_0 \not\in \QQ$.
Let $\overline{D}$ be an arithmetic divisor of $C^{\infty}$-type on $\PP^1_{\ZZ}$ 
given by
\[
\overline{D} := \left( D, \log (a_0 + a_1 \vert z \vert^2)\right).
\]
Then it is easy to see that
\[
\Gamma_{\QQ}^{\times}(X, \overline{D}) = \emptyset\quad\text{and}\quad
\Gamma_{\RR}^{\times}(X, \overline{D}) \ni z^{a_1}
\]
(for details, see \cite[(6) in Theorem~2.3]{MoBig}).
Thus $\mu_{\QQ, \xi}(\overline{D}) = \infty$ for all $\xi \in \PP^1(\overline{\QQ})$ and
\[
\mu_{\RR, \xi}(\overline{D}) 
\begin{cases}
\leq a_0 & \text{if $\xi = (0:1)$}, \\
\leq a_1 & \text{if $\xi = (1:0)$}, \\
= 0 & \text{if $\xi \in \PP^1(\overline{\QQ}) \setminus \{ (0:1), (1:0) \}$}.
\end{cases}
\]
\end{Remark}

Next we consider the following lemmas, which will be important for the proof of Theorem~\ref{thm:equal:mu:Q:mu:R}.

\begin{Lemma}
\label{lem:cont:asym:mult}
We assume that $\overline{D}$ is big.
Let $a = \inf\{ x \in \RR \mid \avol(\overline{D} + (0, x)) > 0 \}$ and  
let
$f : (a, \infty) \to \RR$ be the function given by
$f(x) = \mu_{\KK,\xi}(\overline{D} + (0, x))$.
Then $f$ is a monotone decreasing continuous function.
\end{Lemma}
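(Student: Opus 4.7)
The plan is to prove $f$ is convex and monotone decreasing on $(a,\infty)$, whence continuity is automatic, because every convex function on an open real interval is continuous there. The preliminary observation is that $f$ is finite-valued: for $x>a$ the arithmetic divisor $\overline{D}+(0,x)$ is big, so $\aH(X,n(\overline{D}+(0,x))) \neq \{0\}$ for some positive integer $n$; an $n$-th root of a nonzero section lies in $\widehat{\Gamma}^{\times}_{\QQ}(X,\overline{D}+(0,x)) \subseteq \widehat{\Gamma}^{\times}_{\KK}(X,\overline{D}+(0,x))$, so $f(x)<\infty$.

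Monotonicity is immediate from Proposition~\ref{prop:mu:basic}(2) applied to $\overline{D}+(0,x) \leq \overline{D}+(0,y)$ for $x\leq y$: since the Cartier-divisor part of the difference is zero, $\mult_\xi$ of the difference vanishes and $f(y) \leq f(x)$. For convexity I would exploit the identity
\[
\overline{D}+\bigl(0,\lambda x_1+(1-\lambda)x_2\bigr) = \lambda\bigl(\overline{D}+(0,x_1)\bigr)+(1-\lambda)\bigl(\overline{D}+(0,x_2)\bigr),
\]
which, combined with the subadditivity in Proposition~\ref{prop:mu:basic}(1) and the positive homogeneity in Proposition~\ref{prop:mu:basic}(4), yields
\[
f(\lambda x_1+(1-\lambda)x_2) \leq \lambda f(x_1)+(1-\lambda)f(x_2)
\]
for every $\lambda \in [0,1]\cap\KK$. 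For $\KK=\RR$ this is ordinary convexity, and continuity follows.

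When $\KK=\QQ$, one obtains the inequality only for rational $\lambda$. In particular, midpoint convexity (taking $\lambda=1/2$) together with the monotonicity already established implies full convexity, by the classical theorem that a monotone, midpoint-convex function on an open real interval is convex (a monotone function is measurable, and a measurable Jensen-convex function is convex). This is the main anticipated obstacle, but it is handled by this well-known principle. Once $f$ is convex on $(a,\infty)$, the continuity conclusion of the lemma is immediate.
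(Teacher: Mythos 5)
Your proof is correct, and for the hard case $\KK=\QQ$ it takes a genuinely different route from the paper's. Both arguments share the easy parts (monotonicity via Proposition~\ref{prop:mu:basic}(2), the convexity-type identity via (1) and (4), and the observation that $\RR$-convexity already settles the case $\KK=\RR$). Where you diverge is how to get from the $\QQ$-restricted convexity inequality to continuity. The paper imports a Lipschitz estimate from outside the present paper: by \cite[Proposition~1.3.1]{MoArLin} there are $\epsilon, L>0$ with $0\le f(v)-f(u)\le L(u-v)$ for rational $u\ge v$ near a given point, and then monotonicity lets one sandwich arbitrary reals between rationals and transfer the Lipschitz bound. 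You instead observe that, since only the scalar $\lambda$ (not $x_1,x_2$) needs to lie in $\KK$ for the inequality $f(\lambda x_1 + (1-\lambda)x_2)\le\lambda f(x_1)+(1-\lambda)f(x_2)$, the choice $\lambda=1/2$ gives midpoint convexity on the whole real interval $(a,\infty)$; combined with measurability of $f$ (which follows from monotonicity), Sierpi\'nski's theorem that a measurable Jensen-convex function on an open interval is convex finishes the job. Your route is more self-contained in that it relies on a classical fact of real analysis rather than an auxiliary Lipschitz estimate, whereas the paper's route gives quantitative (local Lipschitz) control that could be useful elsewhere. Either way the conclusion is the same, and your reading of which variables must be rational in the convexity inequality is the correct one.
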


\begin{proof}
For $x, y \in (a, \infty)$ with $x \leq y$, we have $\overline{D} + (0, x) \leq \overline{D} + (0, y)$, and
hence $f(x) \geq f(y)$ by (2) in Proposition~\ref{prop:mu:basic}.
Here let us see that $f$ is a $\KK$-convex function on $(-\infty, a) \cap \KK$, that is,
\[
f(tx + (1-t)y) \leq t f(x) + (1-t)f(y)
\]
holds for all $x, y \in (a, \infty) \cap \KK$ and
$t \in [0,1] \cap \KK$. Indeed, by using (1) and (4) in Proposition~\ref{prop:mu:basic},
\begin{align*}
f(tx + (1-t)y) & = \mu_{\KK,\xi}\left(t(\overline{D} + (0, x)) + (1-t)(\overline{D} + (0, y))\right) \\
& \leq  \mu_{\KK,\xi}\left(t(\overline{D} + (0, x))\right) + \mu_{\KK,\xi}\left((1-t)(\overline{D} + (0, y))\right) \\
& = t f(x) + (1-t)f(y).
\end{align*}
The continuity of an $\RR$-convex function on an open interval of $\RR$ is well-known
(cf. \cite[Theorem~5.5.1]{Web}), 
so that
we assume $\KK = \QQ$.
We check the continuity of $f$ at $x \in (a, \infty)$.
By \cite[Proposition~1.3.1]{MoArLin},
there are positive numbers $\epsilon$ and $L$ such that $(x - \epsilon, x + \epsilon) \subseteq (a, \infty)$ and
\[
0 \leq f(v) - f(u) \leq  L (u - v)
\]
for all $u, v \in (x - \epsilon, x + \epsilon) \cap \QQ$ with $u \geq v$.
Let $y, z \in (x - \epsilon, x + \epsilon)$ with $y \geq z$.
Here we choose arbitrary rational numbers $u, v$ such that
$x -  \epsilon < v \leq z \leq y \leq u < x + \epsilon$. Then
\[
0 \leq f(z) - f(y) \leq f(v) - f(u) \leq L(u - v),
\]
and hence $0 \leq f(z) - f(y) \leq L (y - z)$ holds.
Therefore, the lemma follows.
\end{proof}

\begin{Lemma}
\label{lem:Q:approximation}
We assume that $\overline{D}$ is effective.
Let $\phi_1, \ldots, \phi_r \in \Rat(X)^{\times}_{\QQ}$ and
$a_1, \ldots, a_r \in \RR$ with
$a_1 \widehat{(\phi_1)} + \cdots + a_r \widehat{(\phi_r)} + \overline{D} \geq 0$.
Then there is a subspace $W$ of $\QQ^r$ over $\QQ$ with the following properties:
\begin{enumerate}
\renewcommand{\labelenumi}{(\arabic{enumi})}
\item
$\dim_{\QQ} W = \dim_{\QQ} \langle a_1, \ldots, a_r \rangle_{\QQ}$,
where $\langle a_1, \ldots, a_r \rangle_{\QQ}$ is the subspace of $\RR$ generated by
$a_1, \ldots, a_r$ over $\QQ$.

\item
$(a_1, \ldots, a_r) \in W_{\RR} := W \otimes_{\QQ} \RR$.

\item
For positive numbers $\epsilon$ and $\epsilon'$, there is a positive number $\delta$ such that
\[
c_1 \widehat{(\phi_1)} + \cdots + c_r \widehat{(\phi_r)} + \overline{D} + (0, \epsilon') \geq 0
\]
for any $(c_1, \ldots, c_r) \in W_{\RR}$ with 
\[
\Vert (c_1, \ldots, c_r) - (a_1/(1+\epsilon),
\ldots, a_r/(1+\epsilon)) \Vert \leq \delta,
\]
where $\Vert\cdot\Vert$ is the standard $L^2$-norm
on $\RR^r$.
\end{enumerate}
\end{Lemma}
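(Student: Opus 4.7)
The plan is to construct $W$ directly from the $\QQ$-linear structure of the $a_i$. Fix a $\QQ$-basis $\alpha_1,\ldots,\alpha_s$ of $\langle a_1,\ldots,a_r\rangle_\QQ$ and expand $a_i = \sum_j q_{ij}\alpha_j$ with $q_{ij}\in\QQ$. Define $W \subseteq \QQ^r$ to be the $\QQ$-span of the $s$ vectors $Q_j := (q_{1j},\ldots,q_{rj})$. Choosing indices $i_1,\ldots,i_s$ for which $\{a_{i_k}\}$ is a basis of $\langle a_i\rangle_\QQ$ makes the submatrix $(q_{i_k,j})$ an invertible change-of-basis matrix, hence $\dim_\QQ W = s$, and $(a_1,\ldots,a_r) = \sum_j \alpha_j Q_j \in W_\RR$, giving (1) and (2). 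Every $c \in W_\RR$ is uniquely of the form $c_i = \sum_j \beta_j q_{ij}$; setting $\psi_j := \prod_i \phi_i^{q_{ij}} \in \Rat(X)^{\times}_{\QQ}$, one has $\sum_i c_i \widehat{(\phi_i)} = \sum_j \beta_j \widehat{(\psi_j)}_{\QQ}$. At $c = a/(1+\epsilon)$ the arithmetic divisor decomposes as $\frac{1}{1+\epsilon}\overline{E} + \frac{\epsilon}{1+\epsilon}\overline{D}$ with $\overline{E} := \sum_i a_i\widehat{(\phi_i)} + \overline{D} \geq 0$, a sum of two effective divisors which supplies the slack we will use.

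For property (3), the divisor part is handled prime-by-prime via $\kappa_\Gamma(c) := \mult_\Gamma(D) + \sum_i c_i \mult_\Gamma(\phi_i)$. Three sub-cases arise: \emph{(i)} if $\kappa_\Gamma(a)>0$, continuity preserves $\kappa_\Gamma(c)>0$ near $a/(1+\epsilon)$; \emph{(ii)} if $\kappa_\Gamma(a)=0$ but $\mult_\Gamma(D)>0$, then $\kappa_\Gamma(a/(1+\epsilon)) = \epsilon\mult_\Gamma(D)/(1+\epsilon) > 0$ and continuity again suffices; \emph{(iii)} if $\kappa_\Gamma(a) = \mult_\Gamma(D) = 0$, then $\sum_i a_i\mult_\Gamma(\phi_i) = 0$ becomes $\sum_j \alpha_j (\sum_i q_{ij}\mult_\Gamma(\phi_i)) = 0$ with each inner sum in $\QQ$, so $\QQ$-linear independence of $\{\alpha_j\}$ forces every $\sum_i q_{ij}\mult_\Gamma(\phi_i) = 0$ and hence $\sum_i c_i \mult_\Gamma(\phi_i) = 0$ for every $c \in W_\RR$; in particular $\kappa_\Gamma$ is identically zero along $W_\RR$. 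This rigidity at sub-case (iii) primes is the precise payoff of the construction of $W$.

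For the Green function inequality $G_c + \epsilon' \geq 0$ a.e., where $G_c := g - \sum_i c_i\log|\phi_i|^2$, I would cover $X(\CC)$ by finitely many local charts $U_x$ on which $|t_k(y)| \leq 1$, giving the local expansion $G_c(y) = h_x(c,y) - \sum_k \kappa_k(c)\log|t_k(y)|^2$ with $h_x$ jointly continuous in $(c,y)$ and all log terms $\geq 0$. Partition $X(\CC)$ into: \emph{(a)} a compact region away from $\bigcup_i \Supp((\phi_i)) \cup \Supp(D)$, where $G_c$ is jointly continuous and uniform continuity combined with $G_{a/(1+\epsilon)} \geq 0$ (valid on the continuous locus by the a.e.\ hypothesis) yields $G_c + \epsilon' \geq 0$ for $c$ close enough; \emph{(b)} small tubes around sub-case (i)/(ii) primes, where $-\kappa_\Gamma(c)\log|t_\Gamma|^2 \to +\infty$ fast enough to absorb the bounded continuous part; \emph{(c)} neighborhoods of sub-case (iii) primes, where $\kappa_\Gamma(c) \equiv 0$ on $W_\RR$ makes $G_c$ locally continuous across $\Gamma$, so $G_{a/(1+\epsilon)}$ extends continuously with value $\geq 0$ and joint continuity in $(c,y)$ closes the argument. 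A finite open cover then yields a single $\delta$ valid on all of $X(\CC)$.

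The main obstacle is the unboundedness of $g$ and of $\log|\phi_i|^2$ near their singular supports, which kills any naive pointwise perturbation estimate: the error $-\sum_i\eta_i\log|\phi_i|^2$ is generally unbounded. The resolution is the algebraic rigidity along $W_\RR$ established in paragraph two. At sub-case (iii) primes (the only truly dangerous ones) the log-term coefficient $\sum_i\eta_i\mult_\Gamma(\phi_i)$ vanishes identically on $W_\RR$, so the unbounded contribution simply disappears; in the remaining sub-cases the original singularity of $G_{a/(1+\epsilon)}$ blows up to $+\infty$ in the correct direction and dominates the perturbation.
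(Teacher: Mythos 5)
Your proof is correct and follows essentially the same route as the paper: you construct $W$ as the image of $\QQ^s$ under the matrix expressing the $a_i$ in a $\QQ$-basis of $\langle a_1,\ldots,a_r\rangle_\QQ$, exploit the slack in the decomposition at $a/(1+\epsilon)$, use $\QQ$-linear independence to force the coefficient $\kappa_\Gamma(c)$ to vanish identically on $W_\RR$ at the dangerous primes, and conclude by compactness of $X(\CC)$. The only difference is organizational: the paper first reduces to the case where $a_1,\ldots,a_r$ are $\QQ$-linearly independent (so that its Claim~(ii) makes your sub-case~(iii) vacuous) and deduces the general statement at the end via the substitution $\psi_j=\prod_i\phi_i^{e_{ij}}$, whereas you run the argument on $W_\RR$ directly with the three-way prime split.
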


\begin{proof}
First we assume that $a_1, \ldots, a_r$ are linearly independent over $\QQ$, that is,
\[
\dim_{\QQ} \langle a_1, \ldots, a_r \rangle_{\QQ} = r.
\]
Replacing $\phi_1, \ldots, \phi_r, a_1, \ldots, a_r$ by 
$\phi_1^n, \ldots, \phi_r^n, a_1/n, \ldots, a_r/n$ respectively
for some $n \in \ZZ_{>0}$,
we may assume that $\phi_1, \ldots, \phi_r \in \Rat(X)^{\times}$.
The set of all prime divisors on $X$ is denoted by $I$.
Moreover, for $x \in X(\CC)$, the set $\{ B \in I \mid x \in B(\CC) \}$ is denoted by $I_x$.
For $B \in I_x$,
let $B(\CC)_x = B_1 + \cdots + B_{n_{B_x}}$ be the irreducible decomposition of $B(\CC)$ at $x$ on $X(\CC)$,
that is, $B_1, \dots, B_{n_{B_x}}$ are irreducible components of $B(\CC)$ on $X(\CC)$ passing through $x$.
Note that $\ord_{B}(\phi) = \ord_{B_j}(\phi)$ for $\phi \in \Rat(X)^{\times}$ and 
$j=1, \ldots, n_{B_x}$.
We set $\overline{D} = (D, g)$ and $D = \sum_{B \in I} d_B B$.
By our assumption, $d_B \geq 0$ for all $B \in I$ and $g \geq 0$.
For $\pmb{c} = (c_1, \ldots, c_r) \in \RR^r$,
we define $\phi^{\pmb{c}}$,
$D_{\pmb{c}}$, $g_{\pmb{c}}$ and
$\overline{D}_{\pmb{c}}$ to be
\[
\begin{cases}
\phi^{\pmb{c}} := \phi_1^{c_1} \cdots \phi_r^{c_r}, \\
D_{\pmb{c}} := (\phi^{\pmb{c}})_{\RR} + D = \sum_{i=1}^r c_i (\phi_i) + D, \\
g_{\pmb{c}} := \sum_{i=1}^r (-c_i) \log \vert \phi_i \vert^2 + g, \\
\overline{D}_{\pmb{c}} := (D_{\pmb{c}}, g_{\pmb{c}}) = \widehat{(\phi^{\pmb{c}})}_{\RR} + \overline{D}.
\end{cases}
\]
Note that 
\[
D_{\pmb{c}} = \sum_{B \in I} (\ord_B(\phi^{\pmb{c}}) + d_B) B,
\]
where  
$\ord_B : \Rat(X)^{\times}_{\RR} \to \RR$
is the natural extension of the homomorphism $\Rat(X)^{\times} \to \ZZ$ given by $\psi \mapsto \ord_B(\psi)$.
Around $x \in X(\CC)$, we set 
\[
\phi_i = \rho_i \prod_{B \in I_x} \prod_{j=1}^{n_{B_x}} f_{B_j}^{\ord_B(\phi_i)},
\]
where $f_{B_j}$ is a local equation of $B_j$ around $x$ and $\rho_i \in \OO_{X(\CC), x}^{\times}$.
Then
\[
\phi^{\pmb{c}} =  
\rho_1^{c_1} \cdots \rho_r^{c_r} \prod_{B \in I_x} \prod_{j=1}^{n_{B_x}} 
f_{B_j}^{\ord_B(\phi^{\pmb{c}})}.
\]
Thus, if we set
\[
g = \sum_{B \in I_x}  \sum_{j=1}^{n_{B_x}} -d_B \log \vert f_{B_j} \vert^2 + u_x
\]
around $x$, then
\addtocounter{Claim}{1}
\begin{align}
g_{\pmb{c}} & = g + \sum_{B \in I_x}  \sum_{j=1}^{n_{B_x}} (-\ord_B(\phi^{\pmb{c}}))
\log \vert f_{B_j} \vert^2 + \sum_{i=1}^r (-c_i) \log \vert \rho_i \vert^2  \notag \\
& = \sum_{B \in I_x}  \sum_{j=1}^{n_{B_x}} -(d_B+\ord_B(\phi^{\pmb{c}})) 
\log \vert f_{B_j} \vert^2 + \sum_{i=1}^r (-c_i) \log \vert \rho_i \vert^2 + u_x.
\label{eqn:lem:Q:approximation:1}
\end{align}
We put $S = \bigcup_{i=1}^r \Supp((\phi_i))$ and $\pmb{a} = (a_1, \ldots, a_r)$.

\begin{Claim}
\label{claim:thm:equal:mu:Q:mu:R:1}
\begin{enumerate}
\renewcommand{\labelenumi}{(\roman{enumi})}
\item
$\widehat{(\phi^{\pmb{a}/(1+\epsilon)})}_{\RR} + \overline{D} \geq 0$.
In particular, $g_{\pmb{a}/(1 + \epsilon)} \geq 0$.

\item 
$\ord_B(\phi^{\pmb{a}/(1 + \epsilon)}) + d_B > 0$
for all $B \in I$ with $B \subseteq S$. In particular, we can find $\delta_0 > 0$ such that
$(\phi^{\pmb{c}})_{\RR} + D \geq 0$ for any $\pmb{c} \in \RR^r$ with 
$\Vert \pmb{c} - \pmb{a}/(1+\epsilon) \Vert \leq \delta_0$.
\end{enumerate}
\end{Claim}

\begin{proof}
(i) The assertion follows from the following:
\[
\widehat{(\phi^{\pmb{a}/(1+\epsilon)})}_{\RR} + \overline{D} = 
\frac{1}{1+\epsilon}(\widehat{(\phi^{\pmb{a}})}_{\RR} + \overline{D})  
+ \left(1 - \frac{1}{1+\epsilon}\right) \overline{D}.
\]

(ii) It is sufficient to show that $\ord_{B}(\phi^{\pmb{a}}) + (1 + \epsilon) d_{B} > 0$
for all $B \in I$ with $B \subseteq S$.
First of all, note that $\ord_{B}(\phi^{\pmb{a}}) + d_{B} \geq 0$. 
If either $\ord_{B}(\phi^{\pmb{a}}) > 0$ or $d_{B} > 0$, then the assertion is obvious, so that we assume 
$\ord_{B}(\phi^{\pmb{a}}) \leq 0$ and $d_{B} = 0$.
Then 
\[
\ord_{B}(\phi^{\pmb{a}}) = a_1 \ord_{B}(\phi_1) + \cdots + a_r \ord_{B}(\phi_r) = 0,
\]
which yields $\ord_{B}(\phi_1) = \cdots = \ord_{B}(\phi_r) = 0$
by the linear independency of $a_1, \ldots, a_r$ over $\QQ$.
This is a contradiction because $B \subseteq S$.
\end{proof}

\begin{Claim}
For each $x \in X(\CC)$, there are $\delta_x >0$ 
and an open neighborhood $U_x$ of $x$
such that $g_{\pmb{c}} + \epsilon' \geq 0$ on $U_x$ for any
$\pmb{c} \in \RR^r$ with
$\Vert \pmb{c} - \pmb{a}/(1 + \epsilon) \Vert \leq \delta_x$.
\end{Claim}

\begin{proof}
First we assume $x \in S(\CC)$.
For $B \in I$ with $B \subseteq S$, we set 
\[
d'_B = \ord_B(\phi^{\pmb{a}/(1 + \epsilon)}) + d_B > 0.
\]
We choose $\delta' > 0$ such that
\[
\frac{1}{2} d'_B \leq d_B+\ord_B(\phi^{\pmb{c}}) \leq \frac{3}{2} d'_B
\]
for all
$\pmb{c} \in \RR^r$ and $B \in I$ with
$\Vert \pmb{c} - \pmb{a}/(1 + \epsilon) \Vert \leq \delta'$ and $B \subseteq S$.
Note that there are an open neighborhood $U_x$ and a constant $M$ such that
\[
\sum_{i=1}^r (-c_i) \log \vert \rho_i \vert^2 + u_x \geq M
\]
over $U_x$ for all $\pmb{c} \in \RR^r$ with
$\Vert \pmb{c} - \pmb{a}/(1 + \epsilon) \Vert \leq \delta'$.
Moreover, shrinking $U_x$ if necessarily, we may assume that
$\vert f_{B_j} \vert \leq 1$ for all $B_j$ with either $B \subseteq S$ or $d_B > 0$
because the set $\{ B \in I \mid \text{$B \subseteq S$ or $d_B > 0$} \}$ is finite and
$f_{B_j}(x) = 0$.
Thus, by using \eqref{eqn:lem:Q:approximation:1},
\begin{align*}
g_{\pmb{c}} &\geq  \sum_{B \in I_x}  \sum_{j=1}^{n_{B_x}} -(d_B+\ord_B(\phi^{\pmb{c}})) 
\log \vert f_{B_j} \vert^2 + M \\
& = \sum_{\substack{B \in I_x\\ B\subseteq S}}  
\sum_{j=1}^{n_{B_x}} -(d_B+\ord_B(\phi^{\pmb{c}})) 
\log \vert f_{B_j} \vert^2  + \sum_{\substack{B \in I_x\\ B \not\subseteq S, d_B > 0}}  
\sum_{j=1}^{n_{B_x}} -d_B 
\log \vert f_{B_j} \vert^2 + M \\
& \geq  \sum_{B \in I_x,B\subseteq S}  
\sum_{j=1}^{n_{B_x}} -\frac{1}{2} d'_B
\log \vert f_{B_j} \vert^2  + M.
\end{align*}
Note that $\lim_{y\to x} (-d'_B)
\log \vert f_{B_j}(y) \vert^2 = \infty$. Thus, the assertion follows if we take a smaller neighborhood
$U_x$.

\medskip
Next we consider the case where $x \not\in S(\CC)$. Then, by (i) in Claim~\ref{claim:thm:equal:mu:Q:mu:R:1},
\begin{align*}
g_{\pmb{c}} + \epsilon' & = g + \sum_{i=1}^r (-c_i) \log \vert \rho_i \vert^2 + \epsilon' =
g_{\pmb{a}/(1+\epsilon)} + \epsilon' + \sum_{i=1}^r (a_i/(1+\epsilon) -c_i) \log \vert \rho_i \vert^2 \\
& \geq \epsilon' + \sum_{i=1}^r (a_i/(1+\epsilon) -c_i) \log \vert \rho_i \vert^2.
\end{align*}
Thus the assertion follows.
\end{proof}

As $X(\CC) = \bigcup_{x \in X(\CC)} U_x$ and $X(\CC)$ is compact, there are $x_1, \ldots, x_l \in
X(\CC)$ such that $X(\CC) = U_{x_1} \cup \cdots \cup U_{x_{l}}$.
Therefore, if we set 
$\delta_1 = \{ \delta_{x_1}, \ldots, \delta_{x_l} \}$, then
\[
g_{\pmb{c}} + \epsilon'  \geq 0
\]
for all $\pmb{c} \in \RR^r$ with
$\Vert \pmb{c} - \pmb{a}/(1 + \epsilon) \Vert \leq \delta_1$, and hence, 
if we put $\delta = \min \{ \delta_0, \delta_1 \}$, then,
by (ii) in Claim~\ref{claim:thm:equal:mu:Q:mu:R:1}, 
\[
\overline{D}_{\pmb{c}} + (0, \epsilon') \geq 0
\]
for all $\pmb{c} \in \RR^r$ with
$\Vert \pmb{c} - \pmb{a}/(1 + \epsilon) \Vert \leq \delta$.

\bigskip
Finally we consider the lemma without the linear independency of
$a_1, \ldots, a_r$ over $\QQ$.
We set $s = \dim_{\QQ} \langle a_1, \ldots, a_r \rangle_{\QQ}$.
If $s= 0$ (i.e. $a_1 = \cdots = a_r = 0$), then we can take $W$ as $\{ (0, \ldots, 0) \}$, so that
we may assume $s \geq 1$.
Renumbering $a_1, \ldots, a_r$,  we may further assume that $a_1, \ldots, a_s$ are linearly independent.
We set $a_i = \sum_{j=1}^s e_{ij} a_j$ ($i=1, \ldots, r$) and $\psi_j = \prod_{i=1}^r \phi_i^{e_{ij}}$ ($j=1, \ldots, s$).
Note that $e_{ij} \in \QQ$, and hence $\psi_j \in \Rat(X)^{\times}_{\QQ}$.
Let $\alpha : \RR^s \to \RR^r$ be the homomorphism given by
\[
\alpha(x_1, \ldots, x_s) = (\alpha_1(x_1, \ldots, x_s), \ldots, \alpha_r(x_1, \ldots, x_s))\quad\text{and}\quad
\alpha_i(x_1, \ldots, x_s) = \sum_{j=1}^s e_{ij} x_j.
\]
As the rank of $(e_{ij})$ is $s$, $\alpha$ is injective. In addition,
$(a_1, \ldots, a_r) = \alpha(a_1, \ldots, a_s)$ and
\[
x_1 \widehat{(\psi_1)} + \cdots + x_s \widehat{(\psi_s)} =
\alpha_1(x_1, \ldots, x_s) \widehat{(\phi_1)} + \cdots + \alpha_r(x_1, \ldots, x_s) \widehat{(\phi_r)}.
\]
for $(x_1, \ldots, x_s) \in \RR^s$.
Therefore, if we put $W = \alpha(\QQ^s) \subseteq \QQ^r$,
then the assertion follows from the previous observation.
\end{proof}

The following theorem is the main result of this section.

\begin{Theorem}
\label{thm:equal:mu:Q:mu:R}
If $\overline{D}$ is big, then $\mu_{\QQ,\xi}(\overline{D}) = \mu_{\RR,\xi}(\overline{D})$.
\end{Theorem}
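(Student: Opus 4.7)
The plan is to prove the non-trivial inequality $\mu_{\QQ,\xi}(\overline{D}) \leq \mu_{\RR,\xi}(\overline{D})$; the reverse inequality is already given by Proposition \ref{prop:mu:basic}(5). The strategy is to approximate a given $\phi \in \aH_{\RR}(X,\overline{D}) \setminus \{0\}$ by an element $\phi^{\pmb{c}} \in \aH_{\QQ}(X, \overline{D} + (0,\epsilon'))$ using the parametric estimate of Lemma \ref{lem:Q:approximation}, and then remove the $\epsilon'$ perturbation via the continuity provided by Lemma \ref{lem:cont:asym:mult}.

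To invoke Lemma \ref{lem:Q:approximation}, I first reduce to the case where $\overline{D}$ itself is effective. Since $\overline{D}$ is big, $\aH(X, n_0\overline{D}) \neq \{0\}$ for some $n_0 \geq 1$; choosing a nonzero $\phi_0$ in this set and setting $\psi_0 := \phi_0^{1/n_0} \in \Rat(X)^{\times}_{\QQ}$, one has $\overline{D}' := \overline{D} + \widehat{(\psi_0)}_{\QQ} \geq 0$. Proposition \ref{prop:mu:basic}(3) gives $\mu_{\KK,\xi}(\overline{D}') = \mu_{\KK,\xi}(\overline{D})$ for $\KK \in \{\QQ, \RR\}$, so I may replace $\overline{D}$ by $\overline{D}'$ and assume it effective (still big).

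Next, fix $\phi \in \aH_\RR(X, \overline{D}) \setminus \{0\}$ and write $\phi = \phi_1^{a_1} \cdots \phi_r^{a_r}$ with $\phi_i \in \Rat(X)^{\times}$ and $a_i \in \RR$, so $\sum_i a_i \widehat{(\phi_i)} + \overline{D} \geq 0$. For given $\epsilon, \epsilon' > 0$, Lemma \ref{lem:Q:approximation} yields a $\QQ$-subspace $W \subseteq \QQ^r$ with $(a_1,\ldots,a_r) \in W_\RR$ and a $\delta > 0$ such that $\sum_i c_i \widehat{(\phi_i)} + \overline{D} + (0, \epsilon') \geq 0$ whenever $\pmb{c} \in W_\RR$ satisfies $\|\pmb{c} - \pmb{a}/(1+\epsilon)\| \leq \delta$. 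Since $W$ is dense in $W_\RR$, I choose $\pmb{c} \in W$ with rational entries so close, and then $\phi^{\pmb{c}} := \prod \phi_i^{c_i} \in \Rat(X)^{\times}_{\QQ}$ lies in $\aH_\QQ(X, \overline{D} + (0, \epsilon'))$, which yields
\[
\mu_{\QQ, \xi}(\overline{D} + (0, \epsilon')) \;\leq\; \mult_\xi(D + (\phi^{\pmb{c}})_\QQ) \;=\; \mult_\xi(D) + \sum_i c_i \mult_\xi((\phi_i)).
\]

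Letting $\delta \to 0$, the right-hand side tends to $\mult_\xi\bigl(D + \tfrac{1}{1+\epsilon}(\phi)_\RR\bigr) = \tfrac{\epsilon}{1+\epsilon}\mult_\xi(D) + \tfrac{1}{1+\epsilon}\mult_\xi(D + (\phi)_\RR)$ by $\RR$-linearity of $\mult_\xi$ in the coefficients. Taking the infimum over $\phi$ gives
\[
\mu_{\QQ, \xi}(\overline{D} + (0, \epsilon')) \;\leq\; \frac{\epsilon}{1+\epsilon}\mult_\xi(D) + \frac{1}{1+\epsilon}\mu_{\RR, \xi}(\overline{D}).
\]
Sending $\epsilon' \to 0$ and using the continuity of $x \mapsto \mu_{\QQ,\xi}(\overline{D} + (0,x))$ from Lemma \ref{lem:cont:asym:mult} (applicable because $\overline{D}$ is big), then letting $\epsilon \to 0$, yields $\mu_{\QQ,\xi}(\overline{D}) \leq \mu_{\RR,\xi}(\overline{D})$, as required. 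The main technical obstacle is the interplay of the two auxiliary parameters: Lemma \ref{lem:Q:approximation} forces us to work with $\overline{D} + (0,\epsilon')$ rather than $\overline{D}$, and the convex-combination trick $\pmb{a} \rightsquigarrow \pmb{a}/(1+\epsilon)$ inside that lemma introduces the spurious term $\tfrac{\epsilon}{1+\epsilon}\mult_\xi(D)$; both errors must be disposed of in the correct order, with the reduction to effective $\overline{D}$ ensuring that $\mult_\xi(D)$ is a finite constant at the moment the limits are taken.
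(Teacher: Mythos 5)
Your proof is correct and follows the paper's own argument: reduce to the effective case via Proposition \ref{prop:mu:basic}(3), approximate an arbitrary $\phi \in \widehat{\Gamma}^{\times}_{\RR}(X,\overline{D})$ by rational exponents using Lemma \ref{lem:Q:approximation}, and remove the $(0,\epsilon')$-perturbation via the continuity from Lemma \ref{lem:cont:asym:mult}. The only cosmetic differences are that the paper sets $\epsilon = \epsilon'$ from the outset and takes the infimum over $\phi$ at the very end, whereas you keep the two parameters separate and expand $\mult_\xi\bigl(D + \tfrac{1}{1+\epsilon}(\phi)_\RR\bigr)$ by linearity before taking the infimum; these lead to the same conclusion.
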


\begin{proof}
First of all, by (3) in Proposition~\ref{prop:mu:basic},
we may assume that $\overline{D}$ is effective.
Moreover, by (5) in Proposition~\ref{prop:mu:basic},  
$\mu_{\RR,\xi}(\overline{D}) \leq \mu_{\QQ,\xi}(\overline{D})$, so that
we consider the converse inequality. For this purpose, it is sufficient to show that
\[
\mu_{\QQ,\xi}(\overline{D}) \leq \mult_{\xi}(D + (\psi)_{\RR})
\]
for all $\psi \in \widehat{\Gamma}_{\RR}^{\times}(X, \overline{D})$.
We choose $\phi_1, \ldots, \phi_r \in \Rat(X)^{\times}$ and
$\pmb{a} = (a_1, \ldots, a_r) \in \RR^r$ such that $a_1, \ldots, a_r$ are linearly independent over $\QQ$ and
$\psi = \phi_1^{a_1} \cdots \phi_r^{a_r}$.
Let $\epsilon$ be a positive number. 
Applying Lemma~\ref{lem:Q:approximation} to the case $\epsilon = \epsilon'$, 
we can find a sequence $\{ \pmb{c}_n \}_{n=1}^{\infty}$ 
in $\QQ^r$ such that $\lim_{n\to\infty} \pmb{c}_n = \pmb{a}/(1 + \epsilon)$ and 
$\phi^{\pmb{c}_n} \in \widehat{\Gamma}^{\times}_{\QQ}(X, \overline{D}+(0,\epsilon))$ for all $n$.
Thus we have
\[
\mu_{\QQ,\xi}(\overline{D} + (0,\epsilon)) \leq \mult_{\xi}(D_{\pmb{c}_n})
\]
for all $n$, and hence, $\mu_{\QQ,\xi}(\overline{D} + (0,\epsilon)) \leq \mult_{\xi}(D_{\pmb{a}/(1 + \epsilon)})$.
Therefore, by Lemma~\ref{lem:cont:asym:mult},
\[
\mu_{\QQ,\xi}(\overline{D}) =\lim_{\epsilon\downarrow 0} \mu_{\QQ,\xi}(\overline{D} + (0,\epsilon)) \leq \lim_{\epsilon\downarrow 0} \mult_{\xi}(D_{\pmb{a}/(1 + \epsilon)}) 
= \mult_{\xi}(D + (\psi)_{\RR}).
\]
\end{proof}

\section{Proof of Theorem~\ref{thm:vol:base:cond}}
\renewcommand{\theClaim}{\arabic{section}.\arabic{Claim}}
\renewcommand{\theequation}{\arabic{section}.\arabic{Claim}}
\setcounter{Claim}{0}

In this section, we give the proof of Theorem~\ref{thm:vol:base:cond}.
Since 
\[
\avol(\overline{D} ; \mu_1 \xi_1, \ldots,  \mu_l \xi_l) \leq \avol(\overline{D} ; \mu_i \xi_i),
\]
it is sufficient to show the following:
\addtocounter{Claim}{1}
\begin{equation}
\label{eqn:thm:vol:base:cond:1}
\text{If $\overline{D}$ is big and $\mu > \mu_{\RR,\xi}(\overline{D})$ for $\xi \in X_{\QQ}$,
then $\avol(\overline{D} ; \mu \xi)  < \avol(\overline{D})$.}
\end{equation}

\bigskip
Let $B$ be the Zariski closure of $\{ \xi \}$ in $X$.
Let us begin with the following claim:

\begin{Claim}
\label{claim:thm:vol:base:cond:2}
We may assume that $B$ is a prime divisor.
\end{Claim}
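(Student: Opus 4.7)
The plan is to reduce to the divisorial case by a birational modification. Let $B$ denote the Zariski closure of $\xi$ in $X$. Since $\xi \in X_{\QQ}$ and $X_{\QQ}$ is smooth over $\QQ$, the local ring $\OO_{X,\xi}$ is regular, and the claim is nontrivial only when $\codim_{X} B \geq 2$. My first step is to build a birational morphism $f : Y \to X$ from a generically smooth, normal, projective arithmetic variety $Y$, together with a horizontal prime divisor $E$ on $Y$ such that $f(E) = B$ and
\[
\ord_{E}(f^{*}D) = \mult_{\xi}(D)
\]
for every Cartier divisor $D$ on $X$. The natural candidate is the normalization of the blowup of $X$ along $B$, further modified on the generic fiber (via Hironaka) to ensure $Y_{\QQ}$ is smooth; this can be done without touching the open neighborhood of $\xi$ where $\OO_{X,\xi}$ is already regular. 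Near $\xi$ the construction is simply the blowup of a regular local ring along its maximal ideal, whose exceptional divisor $E$ is prime, horizontal, and whose associated divisorial valuation on $\Rat(X) = \Rat(Y)$ is exactly the maximal-ideal-adic order, i.e.\ $\mult_{\xi}$ on $\OO_{X,\xi}$.

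Let $\eta$ denote the generic point of $E$, so $\eta \in Y_{\QQ}$. Using $\Rat(Y) = \Rat(X)$ and the identity
\[
\mult_{\xi}(D + (\phi)_{\RR}) = \ord_{E}(f^{*}D + (\phi)_{\RR}) = \mult_{\eta}(f^{*}D + (\phi)_{\RR})
\]
for every $\phi \in \Rat(X)^{\times}_{\RR}$, I obtain the equality of sets
\[
\aH(X, n\overline{D}\, ;\, n\mu\xi) = \aH(Y, n f^{*}\overline{D}\, ;\, n\mu E)
\]
for every $n \geq 1$, together with $\mu_{\RR,\xi}(\overline{D}) = \mu_{\RR,\eta}(f^{*}\overline{D})$. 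Combining these with the birational invariance $\avol(f^{*}\overline{D}) = \avol(\overline{D})$ (so $f^{*}\overline{D}$ is big as well) and the induced identity $\avol(\overline{D}\, ;\, \mu\xi) = \avol(f^{*}\overline{D}\, ;\, \mu E)$, the inequality \eqref{eqn:thm:vol:base:cond:1} for the pair $(\overline{D},\xi)$ on $X$ is equivalent to the same inequality for $(f^{*}\overline{D},\eta)$ on $Y$. Since the Zariski closure of $\{\eta\}$ in $Y$ is the prime divisor $E$, this is the desired reduction.

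The main obstacle lies in producing the model $Y$ with all four requirements simultaneously: normality, projectivity, generic smoothness, and existence of a prime divisor $E$ over $B$ matching multiplicities. Normality and projectivity are formal consequences of the blowup-normalization construction; generic smoothness uses Hironaka's resolution applied to the generic fiber (available in characteristic zero); and the crucial identification $\ord_{E} = \mult_{\xi}$ relies precisely on the regularity of $\OO_{X,\xi}$, which guarantees that the maximal-ideal-adic order filtration is itself the divisorial valuation associated to $E$. Everything else in the reduction is a routine transport along the birational map.
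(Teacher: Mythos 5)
Your proposal follows essentially the same route as the paper: blow up $X$ along $B$ (using that $X_{\QQ}$ is regular so the exceptional divisor over $\xi$ is a prime divisor whose divisorial valuation equals $\mult_{\xi}$), normalize and resolve the generic fiber so the result is again a generically smooth, normal, projective arithmetic variety, and then transport $\mult_{\xi}$, $\aH$, and $\avol$ along the induced isomorphism $\Rat(X)\cong\Rat(Y)$; the paper phrases the last step via the strict transform of the exceptional divisor rather than by arguing the resolution avoids $\xi$, but these are interchangeable.
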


\begin{proof}
Let $\nu_{B} : X_B \to X$ be the blowing-up along $B$.
As $X_{\QQ}$ is regular,  we can find a unique prime divisor $E_B$ on
$X_B$ such that $\nu_B(E_B) = B$. Let $\nu' : X' \to X_B$ be 
a projective birational morphism such that $X'$ is normal and $\nu$ yields 
a resolution of singularities on the generic fiber.
Let $B'$ be the strict transform of $E_B$ and let $\nu : X' \to X$ be the composition
of $\nu' : X' \to X_B$ and $\nu_B : X_B \to X$. If $\xi'$ is the generic point of $B'$,
then it is easy to see that $\ord_\xi(f) = \ord_{\xi'}(\nu^*(f))$ for all $f \in \Rat(X)^{\times}$, and
hence $\mult_{\xi}(L) = \mult_{\xi'}(\nu^*(L))$ for all $L \in \Div(X)_{\RR}$.
Moreover, the natural homomorphism $\nu^* : \Rat(X) \to \Rat(X')$ yields a bijection
$\aH(X, n\overline{D}) \to \aH(X', \nu^*(n\overline{D}))$. Therefore, we have
\[
\#\aH(X, n\overline{D}; n\mu \xi) = \#\aH(X', n\nu^*(\overline{D}); n\mu \xi'),
\]
which implies $\avol(\overline{D}; \mu \xi) = \avol(\nu^*(\overline{D}); \mu \xi')$,
as required.
\end{proof}

From now on, we assume that $B$ is a prime divisor.
Let $\mu_0 = \mu_{\RR,\xi}(\overline{D})$ and let $X \to \Spec(O_K)$ be the Stein factorization of $X \to \Spec(\ZZ)$.

\begin{Claim}
\label{claim:thm:vol:base:cond:3}
There is a positive number $\epsilon_0$ such that $D - (\mu_0 + \epsilon) B$ is big on $X_K$ for all $0 \leq \epsilon \leq \epsilon_0$.
\end{Claim}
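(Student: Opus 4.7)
Plan. Since bigness of an $\RR$-Cartier divisor on $X_K$ is an open condition, it suffices to establish the strict inequality $\mu_0 < \tau_B(D_K)$, where
\[
\tau_B(D_K) := \sup\{t \ge 0 : D_K - tB_K \text{ is big on } X_K\};
\]
any $\epsilon_0 \in (0, \tau_B(D_K) - \mu_0)$ will then do. My plan is to read off $\mu_0$ and $\tau_B(D_K)$ from the arithmetic Okounkov body produced in Subsection~1.2 for a flag adapted to $B$.

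After base-changing to $\overline{K}$ and picking a regular point on $(B_K)_{\overline{K}}$, I fix a flag on $(X_K)_{\overline{K}}$ whose first member is an irreducible component of $(B_K)_{\overline{K}}$; then for any $\phi \in \Rat(X)^\times$ the first coordinate of $\mult_{z_P}((\phi)+mD_K)$ equals $\mult_B((\phi)+mD)$. I apply the construction of Subsection~1.2 with $V_m := H^0(X_K, mD_K)$; this choice contains an ample series because $D_K$ is big (which follows from $\overline{D}$ being arithmetically big). By Theorem~\ref{thm:integral:formula} together with $\avol(\overline{D}) > 0$, the resulting arithmetic Okounkov body $\widehat{\Delta}(\overline{D}) \subseteq \Delta(D_K) \times \RR_{\ge 0}$ has positive $(d+1)$-dimensional Lebesgue measure; being a compact convex body of positive measure it has non-empty interior in $\RR^{d+1}$, so its projection onto the $x_1$-axis is a non-degenerate interval $[\alpha, \beta]$ with $\alpha < \beta$.

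To bound $\alpha$ from below, observe that any $\phi \in \widehat{\Gamma}^\times(X, m\overline{D}+(0,-2tm))$ with $t \ge 0$ automatically belongs to $\widehat{\Gamma}^\times(X, m\overline{D})$, so $\mult_B((\phi)+mD)/m \ge \mu_{\QQ,\xi}(\overline{D})$. The inequality survives $K$-linear combinations and passage to the Okounkov limit, yielding $\alpha \ge \mu_{\QQ,\xi}(\overline{D}) = \mu_{\RR,\xi}(\overline{D}) = \mu_0$, the first equality being Theorem~\ref{thm:equal:mu:Q:mu:R} applied to the big divisor $\overline{D}$. To bound $\beta$ from above, use the inclusion $\widehat{\Delta}(\overline{D}) \subseteq \Delta(D_K) \times \RR$ and the standard slicing identity \cite{LM} for the geometric Okounkov body: $\max_{x \in \Delta(D_K)} x_1 = \tau_B(D_K)$. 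Stringing the inequalities together gives $\mu_0 \le \alpha < \beta \le \tau_B(D_K)$, the desired strict inequality.

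The main obstacle is the identification $\alpha \ge \mu_0$. It requires noting that non-negative metric slack $2tm$ only restricts (never enlarges) the admissible sections, so that the valuations populating $\Delta(V^t_\bullet)$ for $t \ge 0$ are automatically subject to the arithmetic multiplicity bound, and it requires Theorem~\ref{thm:equal:mu:Q:mu:R} in order to convert between $\mu_{\QQ,\xi}$ (which is what arithmetic sections naturally see) and $\mu_{\RR,\xi}$ (which is what the hypothesis $\mu > \mu_{\RR,\xi}(\overline{D})$ provides). The remaining ingredients---the non-empty interior of $\widehat{\Delta}(\overline{D})$ and the slicing identity---are formal consequences of Theorem~\ref{thm:integral:formula} and the geometric Okounkov body theory recalled in Subsection~1.1.
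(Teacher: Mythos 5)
Your proposal is correct in outline, but it takes a genuinely different (and heavier) route than the paper. The paper's own proof of this claim is a short, self-contained soft argument: it picks an auxiliary big arithmetic Cartier divisor $\overline{A} \geq (0,0)$ with $B \not\subseteq \Supp(A)$, chooses $a>0$ small enough that $\avol(\overline{D}-a\overline{A})>0$, extracts a section $\phi\in\Rat(X)^{\times}_{\QQ}$ with $\overline{D}-a\overline{A}+\widehat{(\phi)}_{\QQ}\geq 0$, and then uses Proposition~\ref{prop:mu:basic}(2) to deduce $\mult_{\xi}(D-aA+(\phi)_{\QQ})\geq\mu_0$, hence $D-\mu_0B\geq aA-(\phi)_{\QQ}$, which is $\QQ$-linearly equivalent to the big divisor $aA_K$; openness of bigness then yields the $\epsilon_0$. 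In contrast, you invoke the full arithmetic Okounkov body $\widehat{\Delta}(\overline{D})$ from Subsection~1.2 and the Lazarsfeld--Musta\c{t}\u{a} slicing picture, deriving $\mu_0\leq\alpha<\beta\leq\tau_B(D_K)$ from the positive measure of $\widehat{\Delta}(\overline{D})$. What your argument buys is a geometric picture (the asymptotic multiplicity as the left edge of the arithmetic Newton--Okounkov body in the $x_1$-direction), which foreshadows the main argument in Section~3; what it costs is that it relies on facts not explicitly established in the paper's Subsection~1.1 for $\RR$-Cartier divisors, notably that the maximal $x_1$-coordinate of $\Delta(D_K)$ is exactly $\sup\{t:D_K-tB_K\text{ big}\}$, and on Theorem~\ref{thm:equal:mu:Q:mu:R}, neither of which the paper needs at this point. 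Two minor points you should make explicit if you pursue this route: (a) one needs the equality $\{G_{\overline{D}}\geq 0\}=\overline{\{G_{\overline{D}}>0\}}$ (a consequence of concavity plus $\{G_{\overline{D}}>0\}\neq\emptyset$) to conclude $\alpha\geq\mu_0$ rather than just that the strict sublevel set sits in $\{x_1\geq\mu_0\}$; and (b) the cleaner version of the upper estimate is not the literal identity $\max x_1=\tau_B(D_K)$ but the observation that for any rational $c$ strictly between $\mu_0$ and $\beta$, the slice $\Delta(D_K)\cap\{x_1\geq c\}$ has positive $d$-dimensional measure and defines a graded subalgebra of $R(D_K-cB_K)$ with positive volume, which forces $D_K-cB_K$ to be big.
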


\begin{proof}
Let $\overline{A}$ be a big arithmetic Cartier divisor of $C^0$-type
on $X$ such that $\overline{A} \geq (0,0)$ and $B \not\subseteq \Supp(A)$.
We can choose a sufficiently small positive number $a$ such that $\avol(\overline{D} - a \overline{A}) > 0$.
In particular, there is $\phi \in \Rat(X)^{\times}_{\QQ}$ such that $\overline{D} - a \overline{A} + \widehat{(\phi)}_{\QQ}
\geq 0$.
By (2) in Proposition~\ref{prop:mu:basic},
\[
\mu_0 = \mu_{\RR,\xi}(\overline{D}) \leq \mu_{\RR,\xi}(\overline{D} - a \overline{A}) + \mult_{\xi}(a A) = 
\mu_{\RR,\xi}(\overline{D} - a \overline{A}) \leq \mult_{\xi}(D - aA + (\phi)_{\QQ}).
\]
Thus $D - a A + (\phi)_{\QQ} \geq \mu_0 B$, and hence $D - \mu_0 B \geq aA - (\phi)_{\QQ}$.
In particular, $D - \mu_0B$ is big on $X_K$, and hence the assertion follows.
\end{proof}

It is sufficient to show \eqref{eqn:thm:vol:base:cond:1} in the case where $\mu = \mu_0 + \epsilon$ with
$0 < \epsilon < \epsilon_0$.
We set $V_m = H^0(X_K, mD_K - m\mu B_K)$.
Note that $\avol(\overline{D};V_{\bullet}) = \avol(\overline{D} ; \mu \xi)$.
By Claim~\ref{claim:thm:vol:base:cond:3}, $V_{\bullet}$ contains an ample series.

We choose $P \in X(\overline{K})$ and a local system of parameters $z_P = (z_1, \ldots, z_d)$ at $P$
such that $P$ is a regular point of $B_K$ and $z_1$ is a local equation of $B$ at $P$.

\begin{Claim}
\label{claim:thm:vol:base:cond:4}
If we set $\mult_{z_P}(L) = (x_1, \ldots, x_d)$ for $L \in \Div(X)_{\RR}$, then
$x_1 = \mult_{\xi}(L)$.
\end{Claim}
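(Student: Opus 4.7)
The plan is to reduce to verifying the claim for a single integral Cartier divisor by $\RR$-linearity, and then to exploit the factorization of a local equation at $P$ into a power of $z_1$ times a factor not divisible by $z_1$.

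First I would observe that both sides are $\RR$-linear in $L$: the map $\mult_{z_P}$ was defined by $\RR$-linear extension from $\Div(X_{\overline{K}})$ to $\Div(X_{\overline{K}})_{\RR}$, and in particular its first coordinate is $\RR$-linear; while $\mult_{\xi}$ is $\RR$-linear on $\Div(X)_{\RR}$ because $\OO_{X,\xi}$ is a DVR (as $B$ is a prime divisor on the normal scheme $X$). Writing $L = \sum_{i} a_i L_i$ with $L_i \in \Div(X)$, it therefore suffices to treat $L \in \Div(X)$.

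For such an $L$, I would write $L = nB + L'$ with $n = \mult_{\xi}(L)$ and $B \not\subseteq \Supp(L')$. Passing to $X_{\overline{K}}$ in a formal neighborhood of $P$, a local equation $f$ of $L$ decomposes as $f = z_1^{n} h$, where $h$ is a local equation of the pullback of $L'$ near $P$. Because $B$ does not appear in $\Supp(L')$, no component of $B_{\overline{K}}$ passing through $P$ lies in $\Supp(L'_{\overline{K}})$, so $z_1 \nmid h$ in $\widehat{\OO}_{X_{\overline{K}},P} = \overline{K}\lformal z_1, \ldots, z_d \rformal$. Expanding
\[
h = \sum_{\alpha \in \ZZ_{\geq 0}^{d}} c_{\alpha}\, z_1^{\alpha_1} z_2^{\alpha_2} \cdots z_d^{\alpha_d},
\]
non-divisibility by $z_1$ means that some $c_{\alpha}$ with $\alpha_1 = 0$ is nonzero. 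Consequently every nonzero monomial of $f = z_1^{n} h$ has $z_1$-exponent at least $n$, and this bound is attained. By definition of $\ord_{z_P}$ via the lexicographic order, the first coordinate of $\ord_{z_P}(f) = \mult_{z_P}(L)$ is exactly $n = \mult_{\xi}(L)$, which is the desired equality.

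The only subtle point, which I view as the main thing to verify carefully, is the compatibility between $\mult_{\xi}$ computed on $X$ and the $z_1$-adic valuation computed in the completion $\widehat{\OO}_{X_{\overline{K}},P}$ after base change. Since $\xi \in X_{\QQ}$ (so that $B$ is horizontal), $X_{\QQ}$ is smooth over $\QQ$, and $z_1$ is by hypothesis a local equation of $B$ at the regular point $P$, this compatibility is immediate: the pullback of a uniformizer of $\OO_{X,\xi}$ generates the ideal of the component of $B_{\overline{K}}$ through $P$, which in the completion is the ideal $(z_1)$. No further input is required.
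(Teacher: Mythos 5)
Your argument follows essentially the same route as the paper's: both extract the $z_1$-part of a local equation of $L$ at $P$ and identify its $z_1$-order with $\mult_{\xi}(L)$, with the remaining coordinates of $\ord_{z_P}$ playing no role since $\ZZ^d$ is ordered lexicographically. The paper reduces first to \emph{effective} $L\in\Div(X)$ (where $f=\sum_{i\geq a}f_i z_1^i$ is literally a power series with $f_a\neq 0$), and then extends by $\RR$-linearity; you reduce only to $L\in\Div(X)$ and then factor $L=nB+L'$ with $B\not\subseteq\Supp(L')$, which is the same idea in a slightly different dress. The only imprecision is that you then expand $h$ (and hence $f=z_1^n h$) as an element of $\overline{K}\lformal z_1,\ldots,z_d\rformal$ and speak of ``every nonzero monomial of $f$ having $z_1$-exponent at least $n$''; if $L$ is not effective, $n$ may be negative and $L'$ may have negative coefficients, so $f$ and $h$ are honest elements of $\Rat(X_{\overline{K}})^{\times}$ rather than power series, and that term-by-term phrasing does not literally apply. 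This is repaired for free either by pushing the linearity reduction one step further to effective $L$, exactly as the paper does, or by invoking the multiplicativity of $\ord_{z_P}$ on $\Rat^{\times}$ (writing $h=h_1/h_2$ with $h_1,h_2$ power series not divisible by $z_1$, so the first coordinate of $\ord_{z_P}(h)$ vanishes). Your flag of the compatibility between $\mult_{\xi}$ on $X$ and the $z_1$-order at $P$ on $X_{\overline K}$ is well placed --- it is a real hypothesis being used, via $P$ regular on $B_K$ and $z_1$ a local equation of $B$ at $P$ --- and the paper does use it, merely more tersely.
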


\begin{proof}
First we assume that $L \in \Div(X)$ and $L$ is effective.
Let $f$ be a local equation of $L$ around $P$.
We set $f = \sum_{i=a}^{\infty} f_i z_1^{i}$ in $\overline{K}[\![z_1, \ldots, z_d]\!]$,
where $a \in \ZZ_{\geq 0}$, $f_i \in K[\![z_2, \ldots, z_d]\!]$ and
$f_a \not= 0$. Then $a = \mult_{\xi}(f)$. Moreover, the lowest term with respect to the lexicographical order must
appear in $f_a z_1^{a}$. Thus $x_1 = a$, as required.

In general, we set $L = \sum_{i=1}^l a_i L_i$, where $a_1, \ldots, a_l \in \RR$ and
$L_i$'s are effective divisors. Moreover, if we set $\mult_{z_P}(L_i) = (x_{i1},\ldots, x_{id})$,
then $x_{i1} = \mult_{\xi}(L_i)$ by the previous observation.
On the other hand, as
\[
\mult_{z_P}(L) = \sum_{i=1}^l a_i \mult_{z_P}(L_i),
\]
the first entry of $\mult_{z_P}(L)$ is equal to 
\[
\sum_{i=1}^l a_i x_{i1} = \sum_{i=1}^l a_i \mult_{\xi}(L_i) = \mult_{\xi}(L),
\]
as desired.
\end{proof}

By Claim~\ref{claim:thm:vol:base:cond:4},
$\Delta(V_{\bullet}) \subseteq \Delta(D_K) \cap \{ x_1 \geq \mu \}$
and $G_{\overline{D}} \geq G_{(\overline{D};V_{\bullet})}$.
As in Theorem~\ref{thm:integral:formula},
let 
$\Theta(\overline{D})$ and $\Theta(\overline{D};V_{\bullet})$ be the closures of
\[
\left\{ x \in \Delta(D_K) \mid G_{\overline{D}}(x) > 0 \right\}\quad\text{and}\quad
\left\{ x \in \Delta(V_{\bullet}) \mid G_{(\overline{D};V_{\bullet})}(x) > 0 \right\}
\]
respectively.
Clearly $\Theta(\overline{D};V_{\bullet}) \subseteq \Theta(\overline{D}) \cap \{ x_1 \geq \mu \}$.

\begin{Claim}
\label{claim:thm:vol:base:cond:5}
$\Theta(\overline{D}) \cap \{ x_1 < \mu \} \not= \emptyset$.
\end{Claim}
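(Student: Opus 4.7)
The plan is to perturb $\overline{D}$ slightly downward in the analytic direction and use it to produce a rational section whose associated point in $\Delta(D_K)$ has first coordinate strictly less than $\mu$ while $G_{\overline{D}}$ remains positive there.

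First I would set $\overline{D}_\delta := \overline{D} - (0,\delta)$ for $\delta > 0$. Since $\overline{D}$ is big, $\overline{D}_\delta$ remains big for all sufficiently small $\delta$. Lemma~\ref{lem:cont:asym:mult} applied to $\overline{D}$ — noting that $0$ lies in the open interval on which the relevant function is defined — tells me that $\delta \mapsto \mu_{\RR,\xi}(\overline{D}_\delta)$ is continuous at $\delta = 0$ with value $\mu_0 < \mu$. Hence I can fix $\delta > 0$ small enough that $\overline{D}_\delta$ is still big and $\mu_{\RR,\xi}(\overline{D}_\delta) < \mu$. Applying Theorem~\ref{thm:equal:mu:Q:mu:R} to the big divisor $\overline{D}_\delta$ then gives $\mu_{\QQ,\xi}(\overline{D}_\delta) = \mu_{\RR,\xi}(\overline{D}_\delta) < \mu$, so there exists $\phi \in \widehat{\Gamma}^\times_\QQ(X,\overline{D}_\delta)$ with $\mult_\xi(D + (\phi)_\QQ) < \mu$.

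Next, clearing denominators, I would pick $n \geq 1$ so that $\psi := \phi^n \in \Rat(X)^\times$ and $n\overline{D} + \widehat{(\psi)} \geq (0, n\delta)$, which is exactly the condition
\[
\psi \in H^0(X_K, nD_K) \cap \aH(X, n\overline{D} + (0, -n\delta)).
\]
By construction of the filtration $\FF^\star_{\overline{D}}$, this places $\psi$ in the subspace defining $G_{\overline{D}}$ at level $t = \delta/2$, so
\[
x_0 := \frac{1}{n}\,\mult_{z_P}\!\bigl((\psi) + nD_{\overline{K}}\bigr)
\]
lies in $\Delta(V_\bullet^{\delta/2})$ (with $V_\bullet = \bigoplus_m H^0(X_K, mD_K)$), giving $G_{\overline{D}}(x_0) \geq \delta/2 > 0$. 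By Claim~\ref{claim:thm:vol:base:cond:4}, the first coordinate of $\mult_{z_P}((\psi) + nD_{\overline{K}})$ equals $\mult_\xi((\psi) + nD) = n\,\mult_\xi(D + (\phi)_\QQ) < n\mu$, so the first coordinate of $x_0$ is strictly less than $\mu$, and $x_0 \in \Theta(\overline{D}) \cap \{x_1 < \mu\}$, as desired.

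The essential inputs are continuity of $\mu_{\RR,\xi}$ under analytic perturbation, rationality $\mu_\QQ = \mu_\RR$ in the big regime, and the identification of $\mult_\xi$ with the first Okounkov coordinate. The main obstacle will be extracting \emph{strict} positivity of $G_{\overline{D}}$ at $x_0$: the infimum definition of $\mu_{\RR,\xi}$ alone only supplies sections in $V_\bullet^0$, which would yield $G_{\overline{D}}(x_0) \geq 0$ but not $> 0$. The downward perturbation by $\delta$ is precisely what converts the limiting infimum into concrete analytic slack, placing $\psi$ inside $V_\bullet^{\delta/2}$ rather than on the boundary $V_\bullet^0$.
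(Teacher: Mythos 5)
Your proposal is correct and follows essentially the same route as the paper: perturb $\overline{D}$ by $(0,-\delta)$, use Lemma~\ref{lem:cont:asym:mult} for continuity of the asymptotic multiplicity and Theorem~\ref{thm:equal:mu:Q:mu:R} for $\mu_\QQ = \mu_\RR$ to extract a rational section with $\mult_\xi < \mu$ lying strictly inside the filtration, and then invoke Claim~\ref{claim:thm:vol:base:cond:4} to read off the first Okounkov coordinate. The only cosmetic difference is the order of operations (you apply Theorem~\ref{thm:equal:mu:Q:mu:R} to $\overline{D}_\delta$ after perturbing, whereas the paper applies it to $\overline{D}$ first and then perturbs $\mu_\QQ$ via continuity) and a factor-of-two bookkeeping in the filtration level; both are harmless.
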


\begin{proof}
As $\overline{D}$ is big,
$\mu_0 = \mu_{\QQ,\xi}(\overline{D})$ by Theorem~\ref{thm:equal:mu:Q:mu:R}, and hence,
by Lemma~\ref{lem:cont:asym:mult},
there is a positive number $t_0$ such that 
\[
\mu_0 \leq \mu_{\QQ,\xi}(\overline{D} + (0, -2t_0)) < \mu_0 + \epsilon/2.
\]
Thus we can find $\phi \in \widehat{\Gamma}^{\times}_{\QQ}(X, \overline{D}+ (0, -2t_0))$
such that 
\[
\mu_{\QQ,\xi}(\overline{D} + (0, -2t_0)) \leq \mult_{\xi}(D + (\phi)_{\QQ}) \leq \mu_0 + \epsilon/2 < \mu.
\]
Moreover, as  $\phi \in \widehat{\Gamma}^{\times}_{\QQ}(X, \overline{D}+ (0, -2t_0))$,
if we set $x = \mult_{z_P}(D + (\phi)_{\QQ})$, then $G_{\overline{D}}(x) > 0$, and hence
$x \in \Theta(\overline{D})$.
\end{proof}

Here we fix notation.
Let $T$ be a topological space and $S$ a subset of $T$.
The set of all interior points of $S$ is denoted by $S^{\circ}$.

\begin{Claim}
\label{claim:thm:vol:base:cond:6}
Let $C$ be a closed convex set in $\RR^d$. 
For $a \in \RR$, we set $C(a) = \{ x \in C \mid p(x) < a \}$,
where $p : \RR^d \to \RR$ is the projection to the first factor, that is, 
$p(x_1, \ldots, x_d) = x_1$.
If $C^{\circ} \not= \emptyset$ and $C(a) \not= \emptyset$ for some $a \in \RR$,
then $C(a)^{\circ} \not= \emptyset$. 
\end{Claim}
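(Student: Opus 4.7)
The plan is to use the standard line-segment principle of convex analysis: if $C$ is convex, $x_0 \in C^\circ$, and $y \in C$, then every point of the half-open segment $(y, x_0]$ lies in $C^\circ$. Granting this, the argument proceeds in the natural way.

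First I pick an interior point $x_0 \in C^\circ$ and a point $y \in C(a)$, so $p(y) < a$. If $p(x_0) < a$ already, then by continuity of $p$ there is an open ball around $x_0$ contained in $C^\circ$ on which $p$ still takes values strictly below $a$; this ball lies in $C(a)$, so $x_0 \in C(a)^\circ$.

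Otherwise $p(x_0) \ge a$. Consider the segment $z_t = (1-t)y + t x_0$ for $t \in [0,1]$. By the line-segment principle, $z_t \in C^\circ$ for every $t \in (0,1]$. On the other hand,
\[
p(z_t) = (1-t)p(y) + t\, p(x_0)
\]
is continuous in $t$ with $p(z_0) = p(y) < a$, so for all sufficiently small $t > 0$ one has $p(z_t) < a$. Fix such a $t$; then $z_t$ is an interior point of $C$ with $p(z_t) < a$, and by continuity of $p$ a small ball around $z_t$ lying in $C$ is also contained in $\{p < a\}$. This ball sits inside $C(a)$, proving $z_t \in C(a)^\circ$.

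The only non-trivial ingredient is the line-segment principle, which is routine and standard (one shows directly that if $B(x_0, r) \subseteq C$ and $z_t = (1-t)y + t x_0$ with $t \in (0,1]$, then $B(z_t, tr) \subseteq C$ by convexity). So there is no real obstacle; the main point is simply the combination of convexity to pull an interior point down to small first coordinate, and continuity of $p$ to spread this to a whole neighborhood.
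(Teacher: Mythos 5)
Your proof is correct, but it takes a genuinely different route from the paper's. The paper argues by contradiction: assuming $C(a)^{\circ}=\emptyset$, it invokes the fact (Webster, Cor.~2.3.2) that a convex set with empty interior in $\RR^d$ is contained in a hyperplane $H$, then uses $C^{\circ}\neq\emptyset$ to find $y\in C\setminus H$, observes $p(y)\geq a$, and finally slides a short distance from a point $x\in C(a)\subseteq H$ towards $y$ to produce a point of $C(a)$ off $H$, a contradiction. You instead argue directly: pick $x_0\in C^{\circ}$ and $y\in C(a)$, use the line-segment principle (the segment from a point of $C$ to an interior point stays interior except possibly at the endpoint) to get $z_t\in C^{\circ}$ for $t\in(0,1]$, and then choose $t$ small enough by continuity that $p(z_t)<a$; a small ball around $z_t$ then witnesses $C(a)^{\circ}\neq\emptyset$. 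Both proofs lean on standard convexity facts of comparable depth; yours has the advantage of being constructive and avoiding the hyperplane-containment characterization of empty-interior convex sets, while the paper's is perhaps more immediate if one already has Webster's corollary to hand. Either way the claim is established, and your sketch of the line-segment principle ($B(z_t,tr)\subseteq C$ when $B(x_0,r)\subseteq C$) is correct, so there is no gap.
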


\begin{proof}
Let us choose $x \in C(a)$.
We assume that  $C(a)^{\circ} = \emptyset$. 
Then, as $C(a)$ is a convex set, by \cite[Corollary~2.3.2]{Web},
there is a hyperplane $H$ such that $C(a) \subseteq H$.
Moreover, as $C^{\circ} \not= \emptyset$, there is $y \in C \setminus H$.
Note that $p(y) \geq a$, so that 
\[
0 < (a-p(x))/(p(y) - p(x)) \leq 1.
\]
Here we choose $t \in \RR$ with $0 < t < (a-p(x))/(p(y) - p(x))$.
Then $(1-t)x + ty \in C \setminus H$ and $p((1-t)x + ty) < a$.
This is a contradiction.
\end{proof}

As $\avol(\overline{D}) > 0$, $\Theta(\overline{D})^{\circ} \not=\emptyset$ by Theorem~\ref{thm:integral:formula}.
Therefore, by Claim~\ref{claim:thm:vol:base:cond:5} and Claim~\ref{claim:thm:vol:base:cond:6}, 
$\Theta(\overline{D})^{\circ} \cap \{ x_1 < \mu \} \not= \emptyset$, and hence
$(\Theta(\overline{D}) \setminus \Theta(\overline{D};V_{\bullet}))^{\circ} \not=\emptyset$ because
\[
(\Theta(\overline{D}) \cap \{ x_1 < \mu \})^{\circ} \subseteq
(\Theta(\overline{D}) \setminus \Theta(\overline{D};V_{\bullet}))^{\circ}.
\]
Moreover, note that 
\[
\Theta(\overline{D})^{\circ} = \left\{ x \in \Delta(D) \mid G_{\overline{D}}(x) > 0 \right\}^{\circ}
\subseteq \left\{ x \in \Delta(D) \mid G_{\overline{D}}(x) > 0 \right\}
\]
(cf. \cite[Corollary~2.3.9]{Web}).
Further, 
\[
\vol(\widehat{\Delta}(\overline{D})) = \int_{\Theta(\overline{D})} G_{\overline{D}}(x) dx
\quad\text{and}\quad
\vol(\widehat{\Delta}(\overline{D};V_{\bullet})) = \int_{\Theta(\overline{D};V_{\bullet})} G_{(\overline{D};V_{\bullet})}(x) dx.
\]
Therefore,
\[
\vol(\widehat{\Delta}(\overline{D})) = \int_{\Theta(\overline{D}) \setminus \Theta(\overline{D};V_{\bullet})} G_{\overline{D}}(x) dx + \int_{\Theta(\overline{D};V_{\bullet})} G_{\overline{D}}(x) dx > \vol(\widehat{\Delta}(\overline{D};V_{\bullet})).
\]
Thus \eqref{eqn:thm:vol:base:cond:1} follows from Theorem~\ref{thm:integral:formula}.

\renewcommand{\theTheorem}{\arabic{section}.\arabic{subsection}.\arabic{Theorem}}
\renewcommand{\theClaim}{\arabic{section}.\arabic{subsection}.\arabic{Theorem}.\arabic{Claim}}
\renewcommand{\theequation}{\arabic{section}.\arabic{subsection}.\arabic{Theorem}.\arabic{Claim}}

\section{Applications of Theorem~\ref{thm:vol:base:cond}}

In this section, let us study several applications of Theorem~\ref{thm:vol:base:cond}.

\subsection{Impossibility of Zariski decomposition}
\label{subsec:imp:Zariski}
\setcounter{Theorem}{0}
Let $X$ be a $(d+1)$-dimensional, generically smooth, normal and projective arithmetic variety
(cf. Conventions and terminology~\ref{CT:arith:var}).
A {\em Zariski decomposition of $\overline{D}$} is a decomposition $\overline{D} = \overline{P} + \overline{N}$ 
such that 

\begin{enumerate}
\renewcommand{\labelenumi}{(\arabic{enumi})}
\item
$\overline{P}$ is a nef arithmetic $\RR$-Cartier divisor of $C^0$-type,

\item
$\overline{N}$ is an effective arithmetic $\RR$-Cartier divisor of $C^0$-type, and that

\item
$\avol(\overline{P}) = \avol(\overline{D})$. 
\end{enumerate}
The arithmetic $\RR$-Cartier divisor
$\overline{P}$ (resp. $\overline{N}$) is called a {\em positive part} (resp. a {\em negative part}) of $\overline{D}$.
As a nef arithmetic $\RR$-Cartier divisor of $C^0$-type is
pseudo-effective (cf. \cite[Proposition~6.2.1, Proposition~6.2.2 and Proposition~6.3.2]{MoArZariski}),
if $\overline{D}$ has a Zariski decomposition, then
$\overline{D}$ is pseudo-effective.
Let $\Upsilon(\overline{D})$ be the set of all
nef arithmetic $\RR$-Cartier 
divisors $\overline{M}$ of $C^0$-type with $\overline{M} \leq \overline{D}$.
Note that $\widehat{\Gamma}^{\times}_{\RR}(X, \overline{D}) \not= \emptyset$ implies
$\Upsilon(\overline{D}) \not= \emptyset$.
In the paper \cite{MoArZariski}, we proved that if $d=1$, $X$ is regular and 
$\Upsilon(\overline{D}) \not= \emptyset$,
then a Zariski decomposition of $\overline{D}$ exists.
Moreover, by \cite[Theorem~3.5.3]{MoD},
if $\overline{D}$ is pseudo-effective and $D$ is numerically trivial on $X_{\QQ}$,
then a Zariski decomposition of $\overline{D}$ exists in the above sense.

\begin{Theorem}
\label{thm:Zariski:decomp:base:point}
Let $\overline{D}$ be a big arithmetic $\RR$-Cartier divisor of $C^0$-type on $X$. If
there is a Zariski decomposition $\overline{D} = \overline{P} + \overline{N}$ of $\overline{D}$,
then
$\mu_{\RR,\xi}(\overline{D}) = \mult_{\xi}(N)$ for all $\xi \in X_{\QQ}$. 
\end{Theorem}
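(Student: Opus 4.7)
The plan is to prove the two inequalities $\mu_{\RR,\xi}(\overline{D}) \leq \mult_{\xi}(N)$ and $\mu_{\RR,\xi}(\overline{D}) \geq \mult_{\xi}(N)$ separately. A preliminary observation drives both: since $\overline{P}$ is nef and $\avol(\overline{P}) = \avol(\overline{D}) > 0$, $\overline{P}$ is nef and big, so Proposition~\ref{prop:mu:basic}(6) gives $\mu_{\RR,\xi}(\overline{P}) = 0$. Moreover, since $\xi \in X_{\QQ}$ and $X_{\QQ}$ is smooth, the local ring $\OO_{X,\xi}$ is regular; the initial form map into its associated graded (a polynomial ring) is multiplicative, so $\mult_{\xi}$ is additive on sums of $\RR$-Cartier divisors through $\xi$.

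For the upper bound, I would observe that any $\psi \in \widehat{\Gamma}_{\RR}^{\times}(X, \overline{P})$ satisfies
\[
\overline{D} + \widehat{(\psi)}_{\RR} = \overline{P} + \widehat{(\psi)}_{\RR} + \overline{N} \geq (0,0),
\]
so $\psi \in \widehat{\Gamma}_{\RR}^{\times}(X, \overline{D})$, and additivity gives $\mult_{\xi}(D + (\psi)_{\RR}) = \mult_{\xi}(P + (\psi)_{\RR}) + \mult_{\xi}(N)$. Taking the infimum over $\psi$ and invoking $\mu_{\RR,\xi}(\overline{P}) = 0$ yields $\mu_{\RR,\xi}(\overline{D}) \leq \mult_{\xi}(N)$. (One can also package this directly from Proposition~\ref{prop:mu:basic}(2) and (6) applied to $\overline{D} = \overline{P} + \overline{N}$.)

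For the lower bound I would argue by contradiction using Theorem~\ref{thm:vol:base:cond}. Assume $\mu_{\RR,\xi}(\overline{D}) < \mult_{\xi}(N)$ and choose $\mu$ strictly between the two values. Theorem~\ref{thm:vol:base:cond} then produces $\avol(\overline{D};\mu\xi) < \avol(\overline{D})$. On the other hand, for each $n$ and each $\phi \in \aH(X, n\overline{P})$, effectivity of $n\overline{N}$ forces $\phi \in \aH(X, n\overline{D})$, and additivity gives
\[
\mult_{\xi}(nD + (\phi)) = \mult_{\xi}(nP + (\phi)) + n\mult_{\xi}(N) \geq n\mult_{\xi}(N) > n\mu,
\]
so $\phi \in \aH(X, n\overline{D}; n\mu\xi)$. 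Thus $\#\aH(X, n\overline{P}) \leq \#\aH(X, n\overline{D}; n\mu\xi)$ for every $n$, yielding
\[
\avol(\overline{P}) \leq \avol(\overline{D};\mu\xi) < \avol(\overline{D}) = \avol(\overline{P}),
\]
the desired contradiction.

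The substantive input is Theorem~\ref{thm:vol:base:cond}, which supplies the strict inequality $\avol(\overline{D};\mu\xi) < \avol(\overline{D})$; everything else is a transparent comparison, exploiting that sections of $n\overline{P}$ automatically vanish along the Zariski closure of $\xi$ to order $\geq n\mult_{\xi}(N)$ once $n\overline{N}$ is added back, and that this exceeds the allowed $n\mu$. No subtleties remain beyond checking that $\overline{P}$ is indeed nef and big so that Proposition~\ref{prop:mu:basic}(6) kicks in, and that $\xi$ being regular on $X_{\QQ}$ legitimates the additivity of $\mult_{\xi}$.
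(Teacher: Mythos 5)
Your proof is correct and follows essentially the same route as the paper: Proposition~\ref{prop:mu:basic}(2) and (6) give $\mu_{\RR,\xi}(\overline{D}) \leq \mult_{\xi}(N)$, and then one derives a contradiction from $\aH(X,n\overline{P}) \subseteq \aH(X,n\overline{D};n\mu\xi)$ together with Theorem~\ref{thm:vol:base:cond}. The only cosmetic difference is that the paper takes $\mu = \mult_{\xi}(N)$ directly (the non-strict inequality $\mult_{\xi}(nD+(\phi)) \geq n\mu$ already suffices for membership in $\aH(X,n\overline{D};n\mu\xi)$), whereas you insert an unnecessary intermediate $\mu$.
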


\begin{proof}
Since $\overline{D} \geq \overline{P}$ and $\mu_{\RR,\xi}(\overline{P}) = 0$ (cf. Proposition~\ref{prop:mu:basic}), we have
\[
\mu_{\RR,\xi}(D) \leq \mu_{\RR,\xi}(\overline{P}) + \mult_{\xi}(N) = \mult_{\xi}(N).
\]
Here we assume that $\mu_{\RR,\xi}(D) < \mult_{\xi}(N)$.
If we set $\mu = \mult_{\xi}(N)$,
then
\[
\avol(\overline{D} ; \mu \xi) < \avol(\overline{D})
\]
by Theorem~\ref{thm:vol:base:cond}.
On the other hand, if $\phi \in \widehat{\Gamma}^{\times}(X, n\overline{P})$, then
\[
\mult_{\xi}(nD + (\phi)) = \mult_{\xi}(nP + (\phi) + nN) \geq \mult_{\xi}(nN) = n\mu.
\]
Therefore,
\[
\aH(X, n\overline{P}) \subseteq \aH(X, n\overline{D} ; n\mu \xi).
\]
Thus $\avol(\overline{P}) \leq \avol(\overline{D}; \mu \xi) < \avol(\overline{D})$.
This is a contradiction.
\end{proof}

\begin{Remark}
\label{rem:def:zariski:decomp}
In the papers \cite{MoBig} and \cite{MoArZariski},
we gave the different  kinds of definitions of Zariski decompositions.
Let us recall their definitions.
Let $\overline{D}$ be an arithmetic $\RR$-Cartier divisor of $C^0$-type.
\begin{enumerate}
\renewcommand{\labelenumi}{(\alph{enumi})}
\item
A decomposition $\overline{D} = \overline{P} + \overline{N}$ is
called a {\em Zariski decomposition in the sense of \cite{MoBig}}
if $\Gamma_{\RR}^{\times}(X, \overline{D}) \not= \emptyset$,
 $\overline{P}$ is a nef arithmetic $\RR$-Cartier divisor of $C^0$-type,
 $\overline{N}$ is an effective arithmetic $\RR$-Cartier divisor of $C^0$-type, and
$\mu_{\RR,\Gamma}(\overline{D}) = \mult_{\Gamma}(N)$ for any horizontal prime divisor
$\Gamma$ on $X$. 

\item
In the case where $d = 1$ and $X$ is regular,
we say a decomposition $\overline{D} = \overline{P} + \overline{N}$ is
a {\em Zariski decomposition in the sense of \cite{MoArZariski}}
if $\Upsilon(\overline{D}) \not= \emptyset$ and
$\overline{P}$ yields the greatest element of $\Upsilon(\overline{D})$.
In this case, $\avol(\overline{D}) = \avol(\overline{P})$ by
\cite[Theorem~9.3.4]{MoArZariski}, so that
it is a Zariski decomposition in the sense of this paper.
\end{enumerate}
The interrelations of these definitions can be described as follows:
\begin{enumerate}
\renewcommand{\labelenumi}{(\roman{enumi})}
\item
If $\overline{D}$ is big, then
a Zariski decomposition in the sense of this paper is
a Zariski decomposition in the sense of \cite{MoBig}
(cf. Theorem~\ref{thm:Zariski:decomp:base:point}).

\item
We assume that $d =1$ and $X$ is regular.
A Zariski decomposition in the sense of this paper gives rise to
a Zariski decomposition in the sense of \cite{MoArZariski}
without the bigness of $\overline{D}$,
that is,
a Zariski decomposition in the sense of this paper implies $\Upsilon(\overline{D}) \not= \emptyset$,
so that, by \cite[Theorem~9.2.1]{MoArZariski},
 we can find the greatest element of $\Upsilon(\overline{D})$,
which turns out to be the positive part of the Zariski decomposition
in the sense  \cite{MoArZariski}.
Moreover, if $\overline{D}$ is big,
then a Zariski decomposition in the sense of this paper coincides with
a Zariski decomposition in the sense of \cite{MoArZariski} 
(cf. Theorem~\ref{thm:Zariski:decomp:charaterization}).
\end{enumerate}
\end{Remark} 

By the above remark,
we have the following corollary.

\begin{Corollary}
\label{cor:impossible:Zariski:decomp}
We suppose that $d \geq 2$ and $X = \PP^d_{\ZZ} (= \Proj(\ZZ[T_0, T_1, \ldots, T_d]))$.
We set 
$H_i := \{ T_i = 0 \}$ and $z_i := T_i/T_0$ for $i=0, \ldots, d$.
For a sequence $\pmb{a} = (a_0, a_1, \ldots, a_d)$ of positive numbers,
we define an $H_0$-Green function $g_{\pmb{a}}$ of ($C^{\infty} \cap \Tpsh$)-type 
on $\PP^d(\CC)$ 
to be
\[
g_{\pmb{a}} := \log (a_0 + a_1 \vert z_1 \vert^2 + \cdots + a_d \vert z_d \vert^2).
\]
We assume that $\overline{D}$ is given by $(H_0, g_{\pmb{a}})$.
If $\overline{D}$ is big and not nef 
\rom{(}i.e., $a_0 + \cdots + a_d > 1$ and
$a_i < 1$ for some $i$\rom{)},
then, for any birational morphism $f : Y \to \PP^d_{\ZZ}$ of generically smooth, normal 
and projective arithmetic varieties,
there is no Zariski decomposition of $f^*(\overline{D})$ on $Y$.
\end{Corollary}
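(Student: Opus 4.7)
The strategy is to argue by contradiction and reduce the claim to the impossibility result \cite[Theorem~5.6]{MoBig}, with Theorem~\ref{thm:Zariski:decomp:base:point} serving as the bridge between the present paper's notion of Zariski decomposition and the more restrictive one of \cite{MoBig}. Concretely, I would suppose that $f^*(\overline{D}) = \overline{P} + \overline{N}$ is a Zariski decomposition of $f^*(\overline{D})$ on $Y$ in the sense of this paper, and derive a contradiction. Because $f$ is birational and $\overline{D}$ is big, $\avol(f^*(\overline{D})) = \avol(\overline{D}) > 0$, so $f^*(\overline{D})$ is big; in particular $\widehat{\Gamma}^{\times}_{\RR}(Y, f^*(\overline{D}))$ is nonempty.

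Then I would apply Theorem~\ref{thm:Zariski:decomp:base:point} to the pair $(Y, f^*(\overline{D}))$: for every $\xi \in Y_{\QQ}$ one obtains $\mu_{\RR,\xi}(f^*(\overline{D})) = \mult_{\xi}(N)$. Specializing to the generic point $\eta_{\Gamma} \in Y_{\QQ}$ of any horizontal prime divisor $\Gamma$ on $Y$ yields $\mu_{\RR, \eta_{\Gamma}}(f^*(\overline{D})) = \ord_{\Gamma}(N)$. Combined with the nefness of $\overline{P}$, the effectivity of $\overline{N}$, and the nonemptyness of $\widehat{\Gamma}^{\times}_{\RR}(Y, f^*(\overline{D}))$, this is exactly the statement that $\overline{P} + \overline{N}$ is also a Zariski decomposition of $f^*(\overline{D})$ in the sense of \cite{MoBig}, as recalled in Remark~\ref{rem:def:zariski:decomp}(a).

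Finally, I would invoke \cite[Theorem~5.6]{MoBig}, which asserts, under the hypotheses of the corollary on $\overline{D} = (H_0, g_{\pmb{a}})$, that no Zariski decomposition of $f^*(\overline{D})$ in the sense of \cite{MoBig} exists on any birational model $f : Y \to \PP^d_{\ZZ}$ of generically smooth, normal and projective arithmetic varieties. This contradicts the conclusion of the previous paragraph and completes the proof. I expect no substantive obstacle beyond the bookkeeping translation between the two notions of Zariski decomposition, which is precisely what Theorem~\ref{thm:Zariski:decomp:base:point} provides for big arithmetic $\RR$-Cartier divisors; in particular I would not attempt to reprove the deeper geometric content of \cite[Theorem~5.6]{MoBig} within this corollary.
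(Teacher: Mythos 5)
Your proposal is correct and takes essentially the same route as the paper, which derives the corollary from Remark~\ref{rem:def:zariski:decomp}(i) (itself an application of Theorem~\ref{thm:Zariski:decomp:base:point}) together with \cite[Theorem~5.6]{MoBig}. The only additions you make are the explicit (and correct) bookkeeping steps---that $f^*(\overline{D})$ remains big under the birational pullback and that specializing $\xi$ to generic points of horizontal prime divisors recovers exactly the defining condition of the \cite{MoBig} decomposition---which the paper leaves implicit.
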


\begin{Remark}
For a non-big pseudo-effective arithmetic $\RR$-Cartier divisor,
to find a Zariski decomposition in the sense of this paper is a non-trivial
problem. This is closely related to the fundamental question raised in
the paper \cite{MoD}.
Here let us consider an example.
We use the same notation as in \cite{MoD}.
We assume that $\overline{D}_{\pmb{a}}$ is pseudo-effective and not big.
Then, by \cite[Corollary~3.6.4, Proposition~3.6.7, Example~3.6.8]{MoD}, we can find $\phi \in \Rat(\PP^n_{\ZZ})^{\times}_{\RR}$ such that
$\overline{D}_{\pmb{a}} + \widehat{(\phi)}_{\RR} \geq 0$. Thus, if we set
$\overline{P} = \widehat{(\phi^{-1})}_{\RR}$ and $\overline{N} = \overline{D}_{\pmb{a}} + \widehat{(\phi)}_{\RR}$,
then the decomposition $\overline{D}_{\pmb{a}} = \overline{P} + \overline{N}$ 
yields a Zariski decomposition.
Note that $\overline{P}$ is not necessarily an arithmetic $\QQ$-Cartier divisor of $C^0$-type.
For example, in the case where $d=1$, $a_0 + a_1 = 1$ and $a_1 \not\in \QQ$,
$\phi$ is given by $z_1^{a_1}$ and
$\overline{P} = -a_1 \widehat{(z_1)}$.
Moreover, $-a_1 \widehat{(z_1)}$ is the greatest element of $\Upsilon(\overline{D}_{\pmb{a}})$
(cf. \cite[Section~4]{MoBig}).
\end{Remark}

\subsection{Characterization of Zariski decompositions on arithmetic surfaces}
\setcounter{Theorem}{0}
Let $X$ be
a regular projective arithmetic surface and
let $\pi : X \to \Spec(O_K)$ be the Stein factorization of $X \to \Spec(\ZZ)$.
In this subsection,
we study the following characterizations of the Zariski decomposition of
big arithmetic $\RR$-Cartier divisors on $X$.

\begin{Theorem}
\label{thm:Zariski:decomp:charaterization}
Let $\overline{D}$ be a big arithmetic $\RR$-Cartier divisor of $C^0$-type on $X$ and
let $\overline{D} = \overline{P} + \overline{N}$ be a Zariski decomposition of $\overline{D}$,
where $\overline{P}$ is a positive part of $\overline{D}$.
Then $\overline{P}$ gives the greatest element of
\[
\Upsilon(\overline{D}) = \left\{ \overline{M} \mid \text{$\overline{M}$ is a nef arithmetic 
$\RR$-Cartier divisor of $C^0$-type on $X$ with
$\overline{M} \leq \overline{D}$} \right\}.
\]
\end{Theorem}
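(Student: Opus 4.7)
The plan is to leverage Moriwaki's earlier arithmetic-surface construction of the greatest element of $\Upsilon(\overline{D})$ to reduce the theorem to the equality of two Zariski decompositions, then use the horizontal rigidity from Theorem \ref{thm:Zariski:decomp:base:point} together with arithmetic intersection theory to force this equality.

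First, since $\overline{P}$ is nef with $\overline{P}\le\overline{D}$, we have $\overline{P}\in\Upsilon(\overline{D})$, so $\Upsilon(\overline{D})\ne\emptyset$. Because $d=1$ and $X$ is regular, \cite[Theorem 9.2.1]{MoArZariski} produces a greatest element $\overline{Q}\in\Upsilon(\overline{D})$, and \cite[Theorem 9.3.4]{MoArZariski} gives $\avol(\overline{Q})=\avol(\overline{D})$. In particular $\overline{D}=\overline{Q}+(\overline{D}-\overline{Q})$ is itself a Zariski decomposition in the sense of this paper. The theorem thus reduces to proving $\overline{P}=\overline{Q}$, for then $\overline{P}$ inherits the maximality of $\overline{Q}$.

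Set $\overline{F}:=\overline{Q}-\overline{P}$; by maximality $\overline{F}\ge 0$. Applying Theorem \ref{thm:Zariski:decomp:base:point} to both decompositions yields, for every $\xi\in X_{\QQ}$,
\[
\mult_\xi(N)=\mu_{\RR,\xi}(\overline{D})=\mult_\xi(D-Q),
\]
so the $\RR$-divisor $F=Q-P$ has zero coefficient along every horizontal prime divisor; equivalently, $F$ is supported on vertical components of $X\to\Spec(O_K)$. Since $\overline{P}$ and $\overline{Q}$ are nef and big, the arithmetic Hilbert--Samuel formula on the arithmetic surface $X$ gives $\overline{P}^2=\avol(\overline{P})=\avol(\overline{Q})=\overline{Q}^2$. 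Expanding $(\overline{P}+\overline{F})^2$ gives $2\,\overline{P}\cdot\overline{F}+\overline{F}^2=0$, while nefness of $\overline{P}$ and effectivity of $\overline{F}$ give $\overline{P}\cdot\overline{F}\ge 0$, and nefness of $\overline{Q}$ gives $\overline{Q}\cdot\overline{F}=\overline{P}\cdot\overline{F}+\overline{F}^2\ge 0$. These three relations together force $\overline{P}\cdot\overline{F}=0$ and $\overline{F}^2=0$.

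The remaining step, which I expect to be the main technical hurdle, is to deduce $\overline{F}=0$ from these orthogonality relations combined with the effectivity constraints on the divisor part and on the Green function. Because $\overline{P}^2>0$, the arithmetic Hodge index theorem of Faltings--Hriljac (in its $C^0$-formulation from \cite{MoArZariski}) applied to $\overline{F}\in\overline{P}^\perp$ with $\overline{F}^2=0$ forces $\overline{F}$ to be numerically trivial. Fibrewise, at each closed point $\mathfrak p\in\Spec(O_K)$, $F_{\mathfrak p}^2=0$ combined with $F_{\mathfrak p}\ge 0$ and the negative semidefiniteness of the intersection form on components of $X_{\mathfrak p}$ shows that $F_{\mathfrak p}$ is a non-negative multiple of the full fiber class; compensating with the principal arithmetic divisor of a suitable uniformizer at $\mathfrak p$, whose archimedean Green contribution is strictly negative, and invoking $g_F\ge 0$ a.e.\ forces the multiple to vanish, so $F=0$. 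Finally, with $F=0$, numerical triviality of $(0,g_F)$ against the big nef $\overline{Q}$ gives $\int_{X(\CC)}g_F\,c_1(\overline{Q})=0$; combined with $g_F\ge 0$ continuous and the positivity of the current $c_1(\overline{Q})$ (whose support has positive mass since $\overline{Q}$ is big), this forces $g_F\equiv 0$. Therefore $\overline{F}=0$, i.e., $\overline{P}=\overline{Q}$, which completes the proof.
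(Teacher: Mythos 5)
Your overall strategy matches the paper's: invoke \cite[Theorem~9.2.1 and Theorem~9.3.4]{MoArZariski} to produce the greatest element $\overline{Q}$ of $\Upsilon(\overline{D})$, apply Theorem~\ref{thm:Zariski:decomp:base:point} to both decompositions to see that $\overline{F}:=\overline{Q}-\overline{P}$ has vertical finite part and non-negative Green function, use $\avol(\overline{P})=\adeg(\overline{P}^2)$ and nefness to derive $\adeg(\overline{P}\cdot\overline{F})=\adeg(\overline{F}^2)=0$, and finally conclude $\overline{F}=0$. This is exactly the skeleton of the paper's Claim~\ref{claim:thm:Zariski:decomp:charaterization:1} and the surrounding argument. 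Up through the orthogonality relations the proposal is correct.

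The final step, however, is where the gap lies, and as written it does not close. First, the appeal to a $C^0$-formulation of the Faltings--Hriljac Hodge index theorem is neither cited in the paper nor needed; more importantly, numerical triviality of $\overline{F}$ would not by itself give $\overline{F}=0$. Second, the passage from $\adeg(\overline{F}^2)=0$ to ``$F_{\mathfrak p}^2=0$ for each $\mathfrak p$'' requires decomposing $\adeg((E,u)^2)$ into a finite part $\adeg((E,0)^2)$ and an archimedean part $\tfrac12\int_{X(\CC)}u\,dd^c([u])$, and then arguing that \emph{each} of these is separately $\leq 0$ (the former by Zariski's lemma fiber by fiber, the latter by \cite[Proposition~1.2.4, (3)]{MoD}) so that each must vanish. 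You skip this sign analysis. Third, the ``compensating with the principal arithmetic divisor of a uniformizer'' step does not yield a contradiction: the archimedean Green contribution of $\widehat{(\pi_{\mathfrak p})}$ is only negative on average over the infinite places, and subtracting it from $\overline{F}$ produces a new arithmetic divisor whose effectivity constraints have no obvious relation to $g_F\geq 0$. The paper instead uses $\adeg(\overline{P}\cdot(E,0))=0$ together with $E=\sum_i a_i\pi^{-1}(\mathfrak p_i)$ and the positivity of $\deg_{\QQ}(P_{\QQ})$ (bigness of $\overline{P}$) to force $a_i=0$. Fourth, your concluding Green-function argument is insufficient: from $\int_{X(\CC)}g_F\,c_1(\overline{Q})=0$, $g_F\geq 0$ and positivity of $c_1(\overline{Q})$ one only gets vanishing on the support of $c_1(\overline{Q})$, which need not be all of $X(\CC)$. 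One must first use the equality case of \cite[Proposition~1.2.4, (3)]{MoD} applied to $\int u\,dd^c([u])=0$ to conclude $u$ is locally constant, and then pair against $c_1(\overline{P})$, whose integral over each connected component equals $\deg_{\QQ}(P_{\QQ})/[K:\QQ]>0$, to get $u\equiv 0$.
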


\begin{proof}
Let us begin with the following claim:

\begin{Claim}
\label{claim:thm:Zariski:decomp:charaterization:1}
Let $\overline{P}$ and $\overline{Q}$ be nef arithmetic $\RR$-Cartier divisors of $C^0$-type.
We assume the following:
\begin{enumerate}
\renewcommand{\labelenumi}{(\arabic{enumi})}
\item
There are an effective vertical $\RR$-Cartier divisor $E$ and
an $F_{\infty}$-invariant non-negative continuous function $u$ on $X(\CC)$ such that
$\overline{Q} = \overline{P} + (E, u)$.


\item $\avol(\overline{P}) = \avol(\overline{Q}) > 0$.
\end{enumerate}
Then $\overline{P} = \overline{Q}$.
\end{Claim}

\begin{proof}
First of all, note that the degree of $P$ on $X_{\QQ}$ is positive and
\[
u \in \langle \Tqpsh(X(\CC)) \cap C^0(X(\CC)) \rangle_{\RR}
\]
(for details, see \cite[SubSection~1.2]{MoD}).
Moreover, by \cite[Proposition~6.4.2]{MoArZariski}, $\avol(\overline{P}) = \adeg(\overline{P}^2)$ and
$\avol(\overline{Q}) = \adeg({\overline{Q}}^2)$, and hence
$\adeg(\overline{P}^2) = \adeg({\overline{Q}}^2)$.
As
\[
\begin{cases}
\adeg({\overline{Q}}^2) = \adeg({\overline{P}}^2) + \adeg(\overline{Q} \cdot (E, u)) +
\adeg(\overline{P} \cdot (E, u)), \\
\adeg(\overline{Q} \cdot (E, u)) \geq 0, \\
\adeg(\overline{P} \cdot (E, u)) \geq 0,
\end{cases}
\]
we have
\[
\adeg(\overline{Q} \cdot (E, u)) = \adeg(\overline{P} \cdot (E, u)) = 0,
\]
which yields
\[
\adeg(\overline{P} \cdot (E, u)) = \adeg((E, u)^2) = 0.
\]
On the other hand, by virtue of \cite[Proposition~2.1.1]{MoD},
\[
\begin{cases}
{\displaystyle \adeg(\overline{P} \cdot (E, u)) = \adeg(\overline{P} \cdot (E, 0)) + \frac{1}{2} \int_{X(\CC)} c_1(\overline{P}) u,} \\
{\displaystyle \adeg((E, u)^2) = \adeg((E, 0)^2) + \frac{1}{2} \int_{X(\CC)} u dd^c([u]).}
\end{cases}
\]
Therefore, by using Zariski's lemma and \cite[Proposition~1.2.4, (3)]{MoD},
\[
\int_{X(\CC)} c_1(\overline{P}) u = \int_{X(\CC)} u dd^c([u]) = 0
\quad\text{and}\quad\adeg(\overline{P} \cdot (E, 0)) = \adeg((E, 0)^2) = 0.
\]
By the equality condition of \cite[Proposition~1.2.4, (3)]{MoD}, 
$u$ is locally constant, and hence
\[
0 = \int_{X(\CC)} u c_1(\overline{P}) = (\rest{u}{X_1} + \cdots + \rest{u}{X_{[K:\QQ]}})\frac{\deg_{\QQ}(P_{\QQ})}{[K : \QQ]},
\]
where $X_1, \ldots, X_{[K:\QQ]}$ are connected components of $X(\CC)$.
Thus $u = 0$ on $X(\CC)$.
Moreover, by the equality condition of Zariski's lemma,
there are $\frak{p}_1, \ldots, \frak{p}_k \in \Spec(O_K) \setminus \{ 0 \}$ and
$a_1, \ldots, a_k \in \RR_{\geq 0}$ such that
$E = a_1 \pi^{-1}(\frak{p}_1) + \cdots + a_k \pi^{-1}(\frak{p}_k)$.
Thus
\[
0 = \adeg(\overline{P} \cdot (E, 0)) = (a_1 \log\#(O_K/\frak{p}_1) + \cdots + a_k \log(O_K/\frak{p}_k) )\frac{\deg_{\QQ}(P_{\QQ})}{[K:\QQ]},
\]
and hence $a_1 = \cdots = a_k = 0$, that is, $E = 0$, as desired.
\end{proof}

Let us go back to the proof of Theorem~\ref{thm:Zariski:decomp:charaterization}.
As $\overline{D}$ is big, $\Upsilon(\overline{D}) \not= \emptyset$, and hence
we can find the greatest element $\overline{P}_{Zar}$ of $\Upsilon(\overline{D})$
by \cite[Theorem~9.2.1]{MoArZariski}.
Then $\overline{P} \leq \overline{P}_{Zar}$ and $\avol(\overline{P}) = \avol(\overline{P}_{Zar}) = \avol(\overline{D})$. Thus, if we set 
$\overline{N}_{Zar} = \overline{D} - \overline{P}_{Zar}$,
then, by Theorem~\ref{thm:Zariski:decomp:base:point},
\[
\mu_{\RR,\xi}(\overline{D}) = \mult_{\xi}(N) = \mult_\xi(N_{Zar})
\]
for all $\xi \in X_{\QQ}$.  
Therefore,
there are an effective vertical $\RR$-Cartier divisor $E$ and
an $F_{\infty}$-invariant non-negative continuous function $u$ on $X(\CC)$ such that
$\overline{P}_{Zar} = \overline{P} + (E, u)$.
Note that $\overline{P}$ is big. 
Thus,
$\overline{P} = \overline{P}_{Zar}$ by Claim~\ref{claim:thm:Zariski:decomp:charaterization:1}.
\end{proof}

As a corollary of Theorem~\ref{thm:Zariski:decomp:charaterization},
we have the following stronger version of Claim~\ref{claim:thm:Zariski:decomp:charaterization:1}.

\begin{Corollary}
\label{cor:nef:comp}
Let $\overline{P}$ and $\overline{Q}$ be nef arithmetic $\RR$-Cartier divisors of $C^0$-type.
If $\overline{P} \leq \overline{Q}$ and $0 < \avol(\overline{P}) = \avol(\overline{Q})$,
then $\overline{P} = \overline{Q}$.
\end{Corollary}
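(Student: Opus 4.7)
The plan is a direct reduction to Theorem~\ref{thm:Zariski:decomp:charaterization}. Set $\overline{N} := \overline{Q} - \overline{P}$; since $\overline{P} \leq \overline{Q}$ and both divisors are of $C^0$-type, $\overline{N}$ is effective and of $C^0$-type. Then $\overline{Q} = \overline{P} + \overline{N}$ satisfies all three conditions in the definition of a Zariski decomposition of $\overline{Q}$: $\overline{P}$ is nef, $\overline{N}$ is effective of $C^0$-type, and $\avol(\overline{P}) = \avol(\overline{Q})$ by hypothesis. Moreover $\overline{Q}$ is big since $\avol(\overline{Q}) > 0$, so Theorem~\ref{thm:Zariski:decomp:charaterization} applies.

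That theorem then asserts that the positive part $\overline{P}$ of this Zariski decomposition is the greatest element of $\Upsilon(\overline{Q})$. On the other hand, $\overline{Q}$ itself is nef and satisfies $\overline{Q} \leq \overline{Q}$, so $\overline{Q} \in \Upsilon(\overline{Q})$. Maximality of $\overline{P}$ in $\Upsilon(\overline{Q})$ therefore forces $\overline{Q} \leq \overline{P}$, and combining this with the hypothesis $\overline{P} \leq \overline{Q}$ yields $\overline{P} = \overline{Q}$, as claimed.

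There is essentially no obstacle to overcome here; all the analytic difficulty has already been absorbed into the proof of Theorem~\ref{thm:Zariski:decomp:charaterization}. Recall that Claim~\ref{claim:thm:Zariski:decomp:charaterization:1} established the same conclusion under the extra structural assumption that $\overline{Q} - \overline{P}$ has the form $(E,u)$ with $E$ vertical effective and $u$ a non-negative continuous function; the point of the characterization theorem was precisely to remove that assumption. The present corollary is the clean repackaging: no hypothesis on the shape of $\overline{Q}-\overline{P}$ is needed beyond $\overline{P} \leq \overline{Q}$.
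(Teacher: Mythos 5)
Your proof is correct and follows exactly the same route as the paper: set $\overline{N} = \overline{Q} - \overline{P}$, observe that $\overline{Q} = \overline{P} + \overline{N}$ is a Zariski decomposition of the big divisor $\overline{Q}$, and invoke Theorem~\ref{thm:Zariski:decomp:charaterization}. You merely spell out the final maximality step ($\overline{Q} \in \Upsilon(\overline{Q})$ forces $\overline{Q} \leq \overline{P}$) that the paper leaves implicit.
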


\begin{proof}
If we set $\overline{N} = \overline{Q} - \overline{P}$, then $\overline{Q} = \overline{P} + \overline{N}$ is a Zariski decomposition of $\overline{Q}$.
Therefore, by Theorem~\ref{thm:Zariski:decomp:charaterization}, $\overline{P} = \overline{Q}$.
\end{proof}

Theorem~\ref{thm:vol:base:cond} still holds for the regular projective arithmetic surface $X$ without the assumption 
$\xi_1, \ldots, \xi_l \in X_{\QQ}$. Namely we have the following theorem:

\begin{Theorem}
\label{thm:vol:base:cond:surface}
Let $\overline{D}$ be a big arithmetic $\RR$-Cartier divisor of $C^0$-type on $X$ and let
$\xi_1, \ldots, \xi_l \in X$. If
$\mu_i > \mu_{\RR,\xi_i}(\overline{D})$ for some $i$,
then 
$\avol(\overline{D} ; \mu_1 \xi_1, \ldots, \mu_l \xi_l) < \avol(\overline{D})$.
\end{Theorem}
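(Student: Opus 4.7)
The plan is to reduce to a single base condition and bifurcate on whether $\xi$ lies in the generic fiber $X_{\QQ}$. As in the proof of Theorem~\ref{thm:vol:base:cond}, the inequality $\avol(\overline{D}; \mu_1\xi_1,\ldots,\mu_l\xi_l) \leq \avol(\overline{D}; \mu_i\xi_i)$ reduces the assertion to the case of a single base condition $\mu\xi$ with $\mu > \mu_{\RR,\xi}(\overline{D})$. If $\xi \in X_{\QQ}$, Theorem~\ref{thm:vol:base:cond} applies directly, so the genuinely new content is the case where $\xi$ is a vertical point, either the generic point of a vertical prime divisor or a closed point of some special fiber.

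If $\xi$ is a closed point of a special fiber, the blow-up $\nu \colon X' \to X$ at $\xi$ is a regular projective arithmetic surface whose exceptional divisor $E$ is a vertical prime divisor; its generic point $\xi'$ satisfies $\mult_{\xi}(L) = \mult_{\xi'}(\nu^* L)$ for every $L \in \Div(X)_{\RR}$, and $\nu^*$ induces a bijection on arithmetic sections. By the argument of Claim~\ref{claim:thm:vol:base:cond:2} the problem therefore reduces to the case where $B := \overline{\{\xi\}}$ is itself a vertical prime divisor on $X$. Since $B$ is vertical, the Green function attached to $(B,0)$ is identically zero, so unwinding the definitions gives
\[
\aH(X, n\overline{D}; n\mu\xi) = \aH\bigl(X, n(\overline{D} - \mu(B,0))\bigr)
\]
for every $n$, and thus $\avol(\overline{D}; \mu\xi) = \avol\bigl(\overline{D} - \mu(B,0)\bigr)$. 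The task becomes to prove $\avol(\overline{D} - \mu(B,0)) < \avol(\overline{D})$.

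For this I would invoke the Zariski decomposition $\overline{D} = \overline{P} + \overline{N}$, which exists by \cite[Theorem~9.2.1]{MoArZariski} since $\overline{D}$ is big, with $\overline{P}$ the greatest element of $\Upsilon(\overline{D})$ and $\avol(\overline{P}) = \avol(\overline{D})$ by Theorem~\ref{thm:Zariski:decomp:charaterization}. Suppose for contradiction $\avol(\overline{D} - \mu(B,0)) = \avol(\overline{D}) > 0$. Then $\overline{D} - \mu(B,0)$ is big, so Theorem~\ref{thm:Zariski:decomp:charaterization} produces its greatest nef part $\overline{P}'$, which satisfies $\overline{P}' \leq \overline{D} - \mu(B,0) \leq \overline{D}$ and hence $\overline{P}' \leq \overline{P}$. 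Equal volumes together with Corollary~\ref{cor:nef:comp} force $\overline{P}' = \overline{P}$, so $\overline{P} \leq \overline{D} - \mu(B,0)$ and $\mu \leq \mult_\xi(N)$. Consequently, the desired strict inequality reduces to $\mu > \mult_\xi(N)$, i.e., to the statement $\mu_{\RR,\xi}(\overline{D}) \geq \mult_\xi(N)$ for vertical $\xi$.

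The reverse inequality $\mu_{\RR,\xi}(\overline{D}) \leq \mult_\xi(N)$ is immediate from Proposition~\ref{prop:mu:basic}(2) together with $\mu_{\RR,\xi}(\overline{P}) = 0$ (Proposition~\ref{prop:mu:basic}(6)), so the bulk of the work concentrates in the converse $\mu_{\RR,\xi}(\overline{D}) \geq \mult_\xi(N)$. The argument of Theorem~\ref{thm:Zariski:decomp:base:point} is unavailable here, as it rests on Theorem~\ref{thm:vol:base:cond} precisely for the same point $\xi$; I propose instead to argue directly on the arithmetic surface. Given $\phi \in \widehat{\Gamma}^{\times}_{\RR}(X,\overline{D})$, I would analyze the Zariski decomposition of the effective big divisor $\overline{D} + \widehat{(\phi)}_{\RR}$ and combine the rigidity furnished by Corollary~\ref{cor:nef:comp} with the intersection-theoretic ingredients behind Claim~\ref{claim:thm:Zariski:decomp:charaterization:1} (Zariski's lemma and the arithmetic Hodge-index information for the surface) to conclude $\mult_\xi\bigl(D + (\phi)_{\RR}\bigr) \geq \mult_\xi(N)$. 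Extracting this inequality cleanly from the surface intersection theory is what I expect to be the main obstacle.
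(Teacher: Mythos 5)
Your proof follows the paper's own argument almost line by line: the reduction to a single base condition, the bifurcation on $\xi \in X_{\QQ}$ versus a vertical point, the reduction via blow-up to the case where $B = \overline{\{\xi\}}$ is a vertical prime divisor, the identity $\avol(\overline{D};\mu\xi) = \avol(\overline{D}-\mu(B,0))$, the case split on bigness of $\overline{D}-\mu(B,0)$, and the use of the Zariski decompositions of $\overline{D}$ and $\overline{D}-\mu(B,0)$ combined with Corollary~\ref{cor:nef:comp} to force $\overline{P}' = \overline{P}$ and hence $\mu \leq \mult_\xi(N)$. All of that is correct and is exactly what the paper does.

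The genuine gap is the final step. You correctly diagnose that what is needed is the inequality $\mu_{\RR,\xi}(\overline{D}) \geq \mult_\xi(N)$ for a vertical prime divisor $B$, and you correctly observe that Theorem~\ref{thm:Zariski:decomp:base:point} cannot be used (it only covers $\xi \in X_{\QQ}$ and would be circular). But you then gesture at a strategy — analyzing the Zariski decomposition of $\overline{D}+\widehat{(\phi)}_{\RR}$, invoking Zariski's lemma and arithmetic Hodge-index — and explicitly acknowledge you cannot carry it out. The paper closes this step in one line by citing \cite[Claim~9.3.5.1]{MoArZariski}, which asserts precisely that on a regular projective arithmetic surface the Zariski decomposition satisfies $\mu_{\RR,\xi}(\overline{D}) = \mult_B(N)$ for \emph{every} prime divisor $B$ on $X$, vertical or horizontal. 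Without that result, or a worked-out replacement for it, the contradiction never materializes and the argument does not close. So while your structure and instincts are right, the missing ingredient is a substantive lemma from the companion paper rather than something one can improvise in a paragraph.
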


\begin{proof}
As in the proof of Theorem~\ref{thm:vol:base:cond}, it is sufficient to see the following:
\addtocounter{Claim}{1}
\begin{equation}
\label{eqn:thm:vol:base:cond:surface:1}
\text{If $\overline{D}$ is big and $\mu > \mu_{\RR,\xi}(\overline{D})$ for $\xi \in X$,
then $\avol(\overline{D} ; \mu \xi)  < \avol(\overline{D})$.}
\end{equation}
By Theorem~\ref{thm:vol:base:cond}, we may assume that the characteristic of the residue field at $\xi$ is positive.
Let $B$ be the Zariski closure of $\{ \xi \}$ in $X$.
In the same way as Claim~\ref{claim:thm:vol:base:cond:2}, we may also assume that
$B$ is a prime divisor.
Note that $\avol(\overline{D} ; \mu \xi) = \avol(\overline{D} -\mu(B,0))$.
If $\overline{D} -\mu(B,0)$ is not big, then the assertion is obvious, so that we may further assume that
$\overline{D} -\mu(B,0)$ is big. We suppose $\avol(\overline{D} -\mu(B,0)) = \avol(\overline{D})$.
Let $\overline{D} = \overline{P} + \overline{N}$ and $\overline{D} -\mu(B,0) = \overline{P}' + \overline{N}'$ be
the Zariski decompositions of $\overline{D}$ and $\overline{D} - \mu(B,0)$ respectively.
As $\overline{P}' \leq \overline{D} - \mu(B, 0) \leq \overline{D}$, we have $\overline{P'} \leq \overline{P}$.
Moreover, 
\[
\avol(\overline{P}') = \avol(\overline{D} - \mu(B,0)) = \avol(\overline{D}) = \avol(\overline{P}).
\]
Thus, by Corollary~\ref{cor:nef:comp}, we obtain $\overline{P}' = \overline{P}$, which implies
\[
\overline{N}' + \mu(B, 0) = \overline{N}.
\]
In particular, $\mult_B(N) \geq \mu$.
On the other hand, by \cite[Claim~9.3.5.1]{MoArZariski},
$\mu_{\RR, \xi}(\overline{D}) = \mult_{B}(N)$, and hence $\mu_{\RR, \xi}(\overline{D}) \geq \mu$.
This is a contradiction.
\end{proof}

\bigskip


\begin{thebibliography}{99}
\bibitem{BC}
S. Boucksom and H. Chen,
Okounkov Bodies of filtered linear series,
preprint.

\bibitem{Ko}
J. Koll\'{a}r,
Lectures on resolution of singularities,
Annals of Mathematics Studies {\bf 166}, Princeton Univ. Press, (2007).

\bibitem{KK1}
K. Kaveh and A. Khovanskii,
Convex bodies and algebraic equations on affine varieties,
preprint.

\bibitem{KK2}
K. Kaveh and A. Khovanskii,
Newton convex bodies, semigroups of integral points, graded algebras and intersection
theory, preprint.

\bibitem{LM}
R. Lazarsfeld and M. Musta\c{t}\u{a},
Convex bodies associated to linear series,
Ann. Scient. \'{E}c. Norm. Sup. {\bf 42} (2009), 783--835.

\bibitem{MoArLin}
A. Moriwaki,
Estimation of arithmetic linear series,
Kyoto J. Math. {\bf 50} (2010), 685-725.

\bibitem{MoArZariski}
A. Moriwaki,
Zariski decompositions on arithmetic surfaces, preprint (arXiv:0911.2951v3 [math.AG]).

\bibitem{MoBig}
A. Moriwaki,
Big arithmetic divisors on $\PP^n_{\ZZ}$, 
Kyoto J. Math. {\bf 51} (2011), 503--534.

\bibitem{MoD}
A. Moriwaki,
Toward Dirichlet's unit theorem on arithmetic varieties,
preprint (arXiv:1010.1599v2 [math.AG]).

\bibitem{O}
A. Okounkov,
Brunn-Minkowski inequality for multiplicities,
Invent. Math. {\bf 125} (1996), 405--411.

\bibitem{Web}
R. Webster,
{\it Convexity}, Oxford University Press, (1994).
\end{thebibliography}
\end{document}